\numberwithin{equation}{section}
\newtheorem{thm}{Theorem}[section]
\newtheorem{lem}[thm]{Lemma}
\newtheorem{prop}[thm]{Proposition}
\newtheorem{defin}[thm]{Definition}
\newtheorem{remark}[thm]{Remark}
\def\enne{\mathbb{N}}
\def\erre{\mathbb{R}}
\def\Rz{\mathbb{R}}
\def\Nz{\mathbb{N}}
\def\P{\mathbb{P}}
\def\E{\mathop{{}\mathbb{E}}}
\def\L{L}
\def\C{K}
\def\cL{\mathscr{L}}
\def\cF{\mathscr{F}}
\def\cB{\mathscr{B}}
\def\eps{\varepsilon}
\def\cP{\mathscr{P}}
\def\mP{\mathcal{P}}
\renewcommand{\d}{{\mathrm d}}
\def\beq{\begin{equation}}
\def\eeq{\end{equation}}
\def\to{\rightarrow}
\def\wto{\rightharpoonup}
\def\wstarto{\stackrel{*}{\rightharpoonup}}
\def\embed{\hookrightarrow}
\def\norm #1{\left\|#1\right\|}
\def\sp #1#2{\left<#1,#2\right>}
\newcommand\ip\sp
\begin{document}
\title[Weak stability by noise for doubly nonlinear evolution equations]
{Weak stability by noise 
for approximations of doubly nonlinear evolution equations}

\author{Carlo Orrieri}
\address[Carlo Orrieri]{Department of Mathematics, 
Universit\`a di Pavia, Via Ferrata 1, 27100 Pavia, Italy.}
\email{carlo.orrieri@unipv.it}
\urladdr{http://www-dimat.unipv.it/orrieri}

\author{Luca Scarpa}
\address[Luca Scarpa]{Department of Mathematics, Politecnico di Milano, 
Via E.~Bonardi 9, 20133 Milano, Italy.}
\email{luca.scarpa@polimi.it}
\urladdr{https://sites.google.com/view/lucascarpa}

\author{Ulisse Stefanelli}
\address[Ulisse Stefanelli]{Faculty of Mathematics,  University of Vienna, Oskar-Morgenstern-Platz 1, A-1090 Vienna, Austria,
Vienna Research Platform on Accelerating Photoreaction Discovery,  $\&$
University of Vienna, W\"ahringer Str. 17, 1090 Vienna, Austria}
\email{ulisse.stefanelli@univie.ac.at}
\urladdr{http://www.mat.univie.ac.at/$\sim$stefanelli}

\subjclass[2010]{35K55, 35R60, 60H15}

\keywords{Doubly nonlinear stochastic equations, 
strong and martingale solutions, existence, 
maximal monotone operators, generalized It\^o's formula}   

\begin{abstract}
  Doubly nonlinear stochastic evolution equations are
  considered. Upon assuming the additive noise to be rough enough, we prove the existence of
  probabilistically weak solutions of Friedrichs type and study their
  uniqueness in law. This entails stability for approximations of 
  stochastic doubly nonlinear equations in a weak probabilistic sense.
  Such effect is a genuinely stochastic, as doubly
  nonlinear equations are not even expected to exhibit 
  uniqueness in the deterministic case. 
\end{abstract}

\maketitle


\section{Introduction}
\setcounter{equation}{0}
\label{sec:intro}

We are interested in abstract doubly nonlinear evolution equations in the form 
\beq\label{eq0}
  A(\partial_t u) + B(u) \ni F(u)\,, \quad u(0)=u_0\,,
\eeq
where the nonlinearities $A$ and $B$ are maximal monotone operators on a Hilbert space $H$,
$F:H\to H$ is a Lipschitz-continuous forcing, and $u_0$ is a given
initial datum. Solutions $u:[0,T]\to H$ to \eqref{eq0} may
describe the dynamics of a given system, up to the final reference
time $T>0$. The term $B$ represents the (sub-) differential of the 
energy driving the system, whereas $A$ corresponds to the dissipation.
In typical examples, $B$ is a nonlinear differential operator while $A$
is a nonlinear map, associated to a possibly multivalued graph.
Partial differential equations in doubly nonlinear form ubiquitously arise in applications 
to classical thermodynamics, e.g.~the well known two-phase Stefan problem,
to mechanics, e.g.~the quasistatic limit of a damped oscillator,
to electronics, e.g.~in RC circuits, and viscoelastoplasticity:
we refer to \cite[Sec.~6]{SS-doubly2} for a brief survey of such examples.

The mathematical literature on deterministic doubly nonlinear
evolutions is extensive.   The special case $A
= {\rm id} $  
corresponds to    perturbed gradient flows, and is rather classical.
The genuine doubly nonlinear case has been originally tackled in
\cite{Colli, Colli-Visintin, diben-show}, where existence of solutions was investigated.
Solutions to \eqref{eq0} are however not expected to be unique, see
\cite{Akagi10}. Some sufficient conditions for uniqueness can be found
in   \cite{Colli-Visintin, diben-show}.

As a consequence of nonuniqueness, the convergence of
approximations of deterministic doubly nonlinear equations may be restricted
to subsequences only. 
More precisely, consider a family of approximated problems 
\beq\label{eq0'}
  A(\partial_t u_\lambda) + B_\lambda(u_\lambda) = F(u_\lambda)\,, \quad u_\lambda(0)=u_0^\lambda\,,
\eeq
where $\lambda>0$ is a positive parameter, $(B_\lambda)_\lambda$ is a 
family of single-valued approximations of $B$, and $(u_0^\lambda)_\lambda$
is a family of regularisations of the initial datum $u_0$. As the
limiting problem \eqref{eq0'}  for $\lambda \to 0$ may have multiple solutions,
convergence of the full sequence $(u_\lambda)_\lambda$ cannot be
directly guaranteed, 
as different subsequences may have different limits. 

The purpose of this paper is to show that the addition of a suitable noise 
may restore stability for such approximations, in a weak probabilistic formulation.
The stochastic counterpart of doubly nonlinear equations \eqref{eq0} was firstly 
introduced in \cite{SS-doubly2} and reads
\beq
  \label{eq0_st}
  \begin{cases}
  \d u =(\partial_t u^d)\,\d t + G\,\d W\,,\\
  A(\partial_t u^d) + B(u) \ni F(u)\,,\\
  u(0)=u_0\,,
\end{cases}
\eeq
where $W$ is a $H$-cylindrical Wiener process and $G$ is a
Hilbert--Schmidt covariance operator. In \cite{SS-doubly2},
existence of solutions to \eqref{eq0_st} has been obtained in the
in the probabilistically weak and
analytically strong sense. 
Stochastic doubly nonlinear equations were also investigated in 
\cite{Barbu-DaPrato, SWZ18} in the context of the two-phase stochastic Stefan problem,
in \cite{ScarStef-SDNL} in a more abstract framework, 
and in \cite{SS-rate} in the rate-independent case.

The stochastic version of the approximated problem \eqref{eq0'} reads
\beq
  \label{eq0'_st}
  \begin{cases}
  \d u_\lambda =(\partial_t u_\lambda^d)\,\d t + G_\lambda\,\d W\,,\\
  A(\partial_t u_\lambda^d) + B_\lambda(u_\lambda) = F(u_\lambda)\,,\\
  u_\lambda(0)=u_0^\lambda\,,
\end{cases}
\eeq
where $(G_\lambda)_\lambda$ are possibly regularised operators approximating $G$.
Under suitable assumptions on the approximating families $(B_\lambda)_\lambda$, 
$(u_0^\lambda)_\lambda$, and $(G_\lambda)_\lambda$, we prove that the 
addition of noise stabilises the dynamics of the approximated problem \eqref{eq0'_st}.
More precisely, we are able to show that, given the approximating sequence,
there exists a unique probability measure $\mu$ on the space of trajectories of \eqref{eq0'}
such that the probability distributions of any solution to \eqref{eq0'_st}
weakly converge to $\mu$ as $\lambda\to0$.

Let us briefly highlight the challenges of the analysis, as well
as the technical strategy for overcome them. 
Existence of solutions for the stochastic system \eqref{eq0_st}
was proved in \cite{SS-doubly2} in the analytically-strong sense
under the assumption that the noise coefficient $G$ is
coloured enough, precisely that $G$ is Hilbert--Schmidt from $H$ to $V$, where
$V$ is the natural space associated to the weak formulation of $B:V\to V^*$.
Let us recall that for doubly nonlinear problems, the analytically-strong 
formulation is usually the only one available: indeed, since generally 
the operator $A$ is of order zero on $H$, if one gives sense to the nonlinearity 
$A(\partial_t u^d)$, then by comparison in \eqref{eq0_st} it holds that $B(u)\in H$
at least almost everywhere, so that the variable $u$
is required to takes values in the domain $D(B)$ of $B$. In particular, 
weaker notions of solutions (e.g.~variational ones) are not directly available
and existence of (analytically strong) solutions can be expected only 
under the assumption that $G$ is Hilbert--Schmidt with value in $V$.

The main technical idea is to rely on techniques from the research stream on
regularisation by noise, namely, to
exploit the Kolmogorov equation associated to 
\eqref{eq0_st}.  Without any claim of completeness, we refer to  
\cite{AB23, BOS_uniq, DPF, Dap1, DPFPR2, priola2, priola2_corr}
and the references therein for the recent literature on 
uniqueness by noise for infinite dimensional stochastic evolution equations.

Due to the unusual form of the doubly nonlinear equation,
this calls for reformulating the stochastic 
system \eqref{eq0_st} in a more classical way, namely 
\[
  \d u + B(u)\,\d t = [F(u)+\C(u)]\,\d t + G\,\d W\,, \qquad u(0)=u_0\,,
\]
where the operator $\C:D(B)\to H$ is defined as
\[
  \C(z):= A^{-1}(F(z)-B(z)) - (F(z)-B(z))\,, \quad z\in D(B)\,.
\]
The Kolmogorov equation associated to \eqref{eq0_st}
formally reads 
\begin{align*}
  &\alpha\varphi(x) - \frac12\operatorname{Tr}\left[GG^*D^2\varphi(x)\right]
  +\left( B(x), D\varphi(x)\right)_H \\
  &\qquad= g(x) +\left(F(x) + \C(x), D\varphi(x)\right)_H\,,
  \quad x\in D(B)\,,
\end{align*}
where $\alpha>0$ is fixed and $g:H\to\erre$ is a given forcing term.
We point out that the Kolmogorov equation is extremely pathological due 
to presence of both the perturbation term $\C$, which is of the same order of 
the leading operator $B$, and the coloured coefficient $G\in\cL^2(H,V)$. 
This features makes the first-order perturbation term too singular and 
the second-order diffusion too degenerate, respectively.
Consequently, we do not expect to solve the Kolmogorov equation, 
not even in some suitably  mild fashion.

In order to overcome this obstruction, we 
introduce a weaker notion of solution
to the limiting stochastic equation \eqref{eq0_st}, allowing
for rougher noise coefficients $G$ being 
 only Hilbert--Schmidt with values in $H$. A natural candidate is the concept of
Friedrichs-weak solution for \eqref{eq0_st}, which consists in defining solutions to \eqref{eq0_st}
as limit of solutions to the approximated problem \eqref{eq0'_st}.
In this way, the limiting noise coefficient $G$ is allowed to be in $\cL^2(H,H)\setminus \cL^2(H,V)$, 
whereas the approximations $(G_\lambda)_\lambda$ may more regular
(e.g.,~even in $\cL^2(H,V)$).
The drawback of such weak formulation is that for the limit problem \eqref{eq0_st}
one can identify the nonlinearity $B(u)$ weakly in $V^*$, while the
identification of the nonlinearity $A(\partial_t u^d)$ (or equivalently of $\C(u)$)
is relaxed. For technical details, we refer to Definition~\ref{def:weak}.

The introduction of such relaxed notion of solution allows also to
treat the operator
$\C$ in the Kolmogorov equation. 
The  main idea is to consider a hybrid Kolmogorov equation of  the form
\begin{align}
  \nonumber
  &\alpha\varphi_\lambda(x) - \frac12\operatorname{Tr}\left[G G^*
  D^2\varphi_\lambda(x)\right]
  +\left( B_\lambda(x), D\varphi_\lambda(x)\right)_H \\
  \label{eq0'_kolm}
  &\qquad= g(x) +\left(F(x) + \C_\lambda(x), D\varphi_\lambda(x)\right)_H\,,
  \quad x\in H\,,
\end{align}
where $\C_\lambda:H\to H$ is defined as
\[
  \C_\lambda(z):= A^{-1}(F(z)-B_\lambda(z)) - (F(z)-B_\lambda(z))\,, \quad z\in H\,.
\]
The term {\it hybrid} refers to the fact that this is not the natural Kolmogorov equation 
associated to \eqref{eq0'_st}, due to the presence of the limit coefficient $G$.
Since $G$ can be taken in $\cL^2(H,H)$ and $\C_\lambda$ is no longer 
singular, existence and uniqueness of strong solutions to \eqref{eq0'_kolm}
can be obtained via fixed point arguments by exploiting strong Feller properties 
of the associated transition semigroup. Furthermore, by exploiting structural 
assumptions on the data $A$ and $g$, we show that the family 
$(\varphi_\lambda)_\lambda$ is uniformly bounded in $C^1_b(H)$.
By exploiting suitable infinite dimensional compactness arguments, 
this allows to pass to the limit in the It\^o formula for $\varphi_\lambda(u_\lambda)$,
hence to show uniqueness of the law of the possible limiting points of the 
sequence $(u_\lambda)_\lambda$.
Eventually, such uniqueness-in-law result for Friedrichs-weak solutions to \eqref{eq0_st}
can be reformulated as a weak stability result for approximations of the stochastic equation 
\eqref{eq0_st} (see Theorem~\ref{thm3} below).

We briefly summarise here the contents of the paper.
Section~\ref{sec:main} presents the mathematical setting and the
main results.
In Section~\ref{sec:non_uniq}, we present an example of
deterministic doubly nonlinear PDE with multiple solutions,
whose stochastic 
counterpart falls within our setting.
In Section~\ref{sec:exist} we prove existence of Friedrichs-weak solutions for the equation 
\eqref{eq0_st}. Section~\ref{sec:kolm} contains the analysis of the Kolmogorov equation,
while in Section~\ref{sec:uniq}, we prove the main results on uniqueness and stability by noise.


\section{Main results}
\label{sec:main}

\subsection{Notation and setting}
\label{ssec:setting}
For a given Banach space $E$, 
the symbols $\mathcal B(E)$ and $\cP(E)$ denote the 
Borel $\sigma$-algebra of $E$ and the space
of probability measures on $\mathcal B(E)$, respectively.
We indicate the space of bounded measurable real functions 
on $E$ by  $\cB_b(E)$.
Moreover, for $k\in\enne$
and $s\in(0,1]$,
the symbol $C^{k,s}_b(E)$ denotes the space of real-valued 
bounded Borel-measurable functions on $E$ which are $k$-times Fr\'echet-differentiable 
with $s$-H\"older-continuous derivatives. In particular, we
indicate by $C^{0,1}_b(E;E)$ the space of Lipschitz continuous
functions from $E$ to itself endowed with the norm
$$\|F\|_{C^{0,1}_b(E;E)}:=\sup_{v\in E}\|F(v)\|_E + \sup_{u\not = v
  \in E}\frac{\|F(u)-F(v)\|_E}{\|u-v\|_E}.$$ 

Given two Hilbert spaces $K_1$ and $K_2$, we use the symbols 
$\cL(K_1,K_2)$, $\cL^1(K_1,K_2)$, and $\cL^2(K_1,K_2)$ to
indicate the spaces of
bounded continuous linear operators, trace-class operators, and Hilbert--Schmidt operators
from $K_1$ to $K_2$, respectively. We shall use the subscript '$+$'
to indicate the respective subspaces
of nonnegative operators.

For every probability space $(\Omega,\cF,\P)$ endowed 
with a saturated and right-continuous filtration $(\cF_t)_{t\geq0}$
(i.e.~$(\cF_t)_{t\geq0}$ satisfies the usual conditions),
the progressive sigma algebra on $\Omega\times(0,+\infty)$
is denoted by $\mathcal P$.
Given a final reference time $T>0$,
we use the classical symbols $L^s(\Omega; E)$ and $L^r(0,T; E)$
for the spaces of strongly measurable Bochner-integrable functions 
on $\Omega$ and $(0,T)$, respectively, for all
$s,r\in[1,+\infty]$ and for every Banach space $E$.
If $s,r\in[1,+\infty)$ we use 
$L^s_\mP(\Omega;L^r(0,T; E))$ to indicate
that measurability is intended with respect to $\mP$.
In the case that $s\in(1,+\infty)$, $r=+\infty$, and $E$ is
separable,
we set 
\begin{align*}
  &L^s_w(\Omega; L^\infty(0,T; E^*)):=
  \big\{v:\Omega\to L^\infty(0,T; E^*) \text{ weakly* measurable: } \ 
  \E\norm{v}_{L^\infty(0,T; E^*)}^s<\infty
  \big\}\,,
\end{align*}
and recall that by 
\cite[Thm.~8.20.3]{edwards} we have the identification
\[
L^s_w(\Omega; L^\infty(0,T; E^*))=
\left(L^{\frac{s}{s-1}}(\Omega; L^1(0,T; E))\right)^*\,.
\]

Throughout the paper, $H$ is a fixed real separable Hilbert space,
with scalar product and norm denoted by $(\cdot, \cdot)_H$ and $\|\cdot\|_H$, respectively. 
The following assumptions will be assumed throughout the paper.
\begin{description}
  \item[A1] $A:H\to2^H$ is a maximal monotone operator, and 
  there exist constants $c_A,C_A>0$ such that, for all $u\in H$ and $v\in A(u)$,
  \begin{align*}
    \norm{v}_H\leq C_A(1+\norm{u}_H)\,, \qquad
    (v,u)_H\geq c_A\norm{u}_H^2 - C_A\,.
  \end{align*}
  Moreover, $A^{-1}\in C^{0,s_A}(H;H)$ for some $s_A\in(0,1)$, and 
  there is a constant $k_A>0$ such that the operator $A^{-1}-k_AI$
  ($I$ is the identity operator) has a bounded range, i.e., 
  \[
  \sup_{v\in H}\norm{A^{-1}(v) - k_A v}_H=:C_A'<+\infty\,,
\]
 for some $C_A'>0$.
  \item[A2] $\L$ is a linear, symmetric, maximal monotone, unbounded operator on $H$
  with effective domain $D(\L)$, such that $D(\L)\embed H$ compactly.
  For every $\sigma \in (0,1)$ we classically define
  $\L^\sigma$ by   spectral theory,
  we set 
  \[
    V_{2\sigma}:=D(\L^\sigma)\,, 
    \quad V:=V_1=D(\L^{1/2})\,,
  \]
  and we assume, with no loss of generality, that $(\L u,u)_H = \| u
  \|_V^2$ for all $u \in V_2 = D(\L)$.
  Moreover, $f:H\to H$ is maximal monotone and bounded, 
  i.e.,~there exists a constant $C_f>0$ such that, for all $u\in H$,
  \[
  \norm{f(u)}_H\leq C_f\,.
  \]
  The nonlinear operator $B$ (of semilinear type) is then defined as
  \[
  B:=\L + f\,,
  \]
  which is maximal monotone on $H$ with effective domain $D(B)=D(\L)=V_2$.
  \item[A3] $F:H\to H$ is
   bounded Lipschitz-continuous, and we set $C_F:=\|F\|_{C^{0,1}_b(H;H)}$.
  \item[A4] $G\in\cL^2(H,H)$ commutes with $\L$ and $\ker G =\{0\}$.
  \item[A5] $u_0\in H$.
\end{description}

We are interested in the doubly nonlinear problem 
\begin{align}
  \label{eq1}
  &\d u =(\partial_t u^d)\,\d t + G\,\d W\,,\\
  \label{eq2}
  &A(\partial_t u^d) + B(u) = F(u)\,,\\
  \label{eq3}
  &u(0)=u_0\,.
\end{align}

We consider two notions of solution for \eqref{eq1}--\eqref{eq3}, both
in the probabilistically weak sense: analytically strong solution
and analytically weak solution in the sense of Friedrichs.

\begin{defin}[Analytically strong solution]
\label{def:strong}
If $u_0\in V$ and $G\in \cL^2(H,V)$,
an \emph{analytically strong solution} to \eqref{eq1}--\eqref{eq3} 
is a family $(\Omega, \cF, (\cF_t)_{t\geq0}, \P, W, u, u^d, v)$
where, for all $T>0$,
\begin{itemize}
  \item $(\Omega, \cF, (\cF_t)_{t\geq0}, \P)$ is a filtered probability space satisfying the usual conditions,
  \item $W$ is a cylindrical Wiener process on $H$,
  \item $u\in L^2_\mP(\Omega; C^0([0,T]; H))\cap 
  L^2_w(\Omega; L^\infty(0,T; V))\cap 
  L^2_\mP(\Omega;L^2(0,T; V_2))$,
  \item $u^d \in L^2_\mP(\Omega; H^1(0,T; H))$,
  \item $v\in L^2_\mP(\Omega; L^2(0,T; H))$
\end{itemize}
and it holds that 
\begin{align}
  \label{str1}
  u(t)=u_0 + \int_0^t\partial_t u^d(s)\,\d s + \int_0^t G\,\d W(s) \qquad&\text{in } H\,, \quad
  \forall\,t\geq0\,,\quad\P\text{-a.s.}\,,\\
  \label{str2}
  v + B(u) = F(u) \qquad&\text{in } H\,, \quad\text{a.e.~in } \Omega\times(0,+\infty)\,,\\
  \label{str3}
  v \in A(\partial_t u^d) \qquad&\text{a.e.~in } \Omega\times(0,+\infty)\,.
\end{align}
\end{defin}

Existence of analytically strong solutions for doubly nonlinear equations 
\eqref{eq1}--\eqref{eq3} in the sense of Definition~\ref{def:strong}
has been proved in \cite{SS-doubly2}.
In particular, note that 
existence of analytically strong solutions cannot 
be expected solely under {\bf A4}, as the condition $G\in \cL^2(H,V)$ is needed.
The motivates to introduce a weaker notion of solution for \eqref{eq1}--\eqref{eq3}
allowing for more general noise coefficients $G$
satisfying only assumption {\bf A4}.

Before moving to the definition of the weak solution in
the sense of Friedrichs, we need an equivalent 
formulation of the concept of strong solution given by Definition~\ref{def:strong}.
First of all, observe that equation \eqref{str2} and inclusion
\eqref{str3} satisfied by analytically strong solutions 
can be written equivalently as
\[
  \partial_t u^d = A^{-1}(F(u)-B(u)) \quad\text{in }
  H\,,\quad\text{a.e.~in } \Omega\times(0, + \infty )\,.
\]
Consequently, when intended in the strong analytical sense
of Definition~\ref{def:strong}, 
problem \eqref{eq1}--\eqref{eq3}
can also be formulated as
\[
  \d u = A^{-1}(F(u)-B(u))\,\d t + G\,\d W\,, \qquad u(0)=u_0\,.
\]
Now, taking assumption {\bf A1}  into account, let us  introduce the operator 
\beq
  \label{def:C}
  \C:D(\L)\to H\,, \qquad \C(x):= A^{-1}(F(x)-B(x)) - k_A(F(x)-B(x))\,, \quad x\in V_2\,, 
\eeq
which has  bounded range, namely, 
\beq
  \label{bound_C}
  \norm{\C(x)}_H \leq C_A' \qquad\forall\,x\in V_2\,.
\eeq
Taking these remarks into account, 
an equivalent formulation of problem \eqref{eq1}--\eqref{eq3} in the strong analytical sense is
\beq
  \label{equiv}
  \d u +k_AB(u)\,\d t= \left[k_AF(u)+  \C(u)\right]\,\d t + G\,\d W\,, \qquad u(0)=u_0\,.
\eeq
\begin{defin}[Analytically strong solution: equivalent formulation]
\label{def:strong_equiv}
If $u_0\in V$ and $G\in \cL^2(H,V)$, an analytically strong solution to \eqref{equiv}
is a family $(\Omega, \cF, (\cF_t)_{t\in[0,T]}, \P, W, u)$
where, for all $T>0$,
\begin{itemize}
  \item $(\Omega, \cF, (\cF_t)_{t\geq0}, \P)$ is a filtered probability space satisfying the usual conditions,
  \item $W$ is a cylindrical Wiener process on $H$,
  \item $u\in L^2_\mP(\Omega; C^0([0,T]; H))\cap 
  L^2_w(\Omega; L^\infty(0,T; V))\cap 
  L^2_\mP(\Omega;L^2(0,T; V_2))$,
\end{itemize}
and it holds that, for every $t\geq0$, $\P$-almost surely,
\begin{align}
  \label{str_equiv}
  u(t)+k_A\int_0^tB(u(s))\,\d s=
  u_0 + \int_0^t\left[k_AF(u(s))+  \C(u(s))\right]\,\d s + \int_0^t G\,\d W(s) \quad&\text{in } H\,.
\end{align}
\end{defin}
The equivalence between Definitions~\ref{def:strong} and \ref{def:strong_equiv}
is immediate by means of position \eqref{def:C}  above.

The  doubly nonlinear evolution \eqref{eq1}--\eqref{eq3} 
inherits a semilinear structure from the structural form of $A$ and
$B$. On the other hand, 
\eqref{equiv} cannot be viewed as a classical  
semilinear stochastic equation. Indeed, the nonlinear perturbation 
$\C$ has the same differential order of the linear part $\L$ (i.e.,~$\C$ is defined on $D(\L)$),
while in classical semilinear problems the linear component 
needs to be dominant with respect to the others.

Bearing these considerations in mind, we introduce a weaker notion
of solution to \eqref{equiv}.
To this end, for every $\lambda>0$ let $B_\lambda:H\to H$ denote the Yosida approximation of $B$:
it is well known that $B_\lambda$ is $\lambda^{-1}$-Lipschitz continuous 
(see for example \cite{barbu-monot} for classical results on monotone analysis).
We define the operator 
\beq
  \label{C_lam}
  \C_\lambda:H\to H\,, \qquad 
  \C_\lambda(x):=A^{-1}(F(x)-B_\lambda(x)) - k_A(F(x)-B_\lambda(x))\,, \quad x\in H\,.
\eeq
Note that the Lipschitz-continuity of $B_\lambda$ and {\bf A1} ensure that 
$\C_\lambda\in C^{0,s_A}_b(H;H)$ with 
\beq
  \label{bound_C_lam}
  \norm{\C_\lambda(x)}_H\leq C_A' \qquad\forall\,x\in H\,.
\eeq
Definition~\ref{def:strong_equiv} can be easily adapted to the
case when $\C$ is replaced by $\C_\lambda$, so that one can speak of
analytically strong solutions for $\lambda >0$, as well. 

\begin{defin}[Friedrichs-weak solution]
\label{def:weak}
A \emph{Friedrichs-weak solution} to \eqref{eq1}--\eqref{eq3}
is a family $(\Omega, \cF, (\cF_t)_{t\geq0}, \P, W, u, y,\Lambda)$,
where, for all $T>0$,
\begin{itemize}
  \item $(\Omega, \cF, (\cF_t)_{t\geq0}, \P)$ is a filtered probability space satisfying the usual conditions,
  \item $W$ is a cylindrical Wiener process on $H$,
  \item $u\in L^2_\mP(\Omega; C^0([0,T]; H))\cap L^2_\mP(\Omega;L^2(0,T; V))$,
  \item $y \in L^\infty_\mP(\Omega\times(0,T); H)$,
  \item $\Lambda=(\lambda_n)_n\subset(0,+\infty)$,
\end{itemize}
such that there exist a sequence of data $(u_0^n, G_n)_n\subset H\times \cL^2(H,H)$,
and a sequence of analytically strong solutions
$(\Omega, \cF, (\cF_t)_{t\geq0}, \P, W, u_n)$
to the problem 
\beq
  \label{eq:n}
  \d u_n +k_AB_{\lambda_n}(u_n)\,\d t =  
  \left[k_AF(u_n)+  \C_{\lambda_n}(u_n)\right]\,\d t + G_n\,\d W\,, \qquad u_n(0)=u_0^n\,,
\eeq
satisfying, as $n\to\infty$, for all $T>0$,
\begin{align*}
  &u_0^n\to u_0 \quad\text{in } H\,,\\
  &G_n\to G \quad\text{in } \cL^2(H,H)\,,\\
  &\lambda_n\searrow 0 \quad\text{in } \erre\,,\\
  &u_n\wstarto u \quad\text{in } L^2_w(\Omega; L^\infty(0,T; H)) \cap L^2_\mP(\Omega; L^2(0,T; V))\,,\\
  &u_n\to u \quad\text{in } L^2(0,T; H)\,, \quad\P\text{-a.s.}\,,\\
  &\C_{\lambda_n}(u_n) \wstarto y \quad\text{in } L^\infty_\mP(\Omega\times(0,T); H)\,.
\end{align*}
\end{defin}

\begin{remark}\rm 
  The concept of Friedrichs-weak solution in Definition~\ref{def:weak}
  is related to the concept of closure of analytically strong solutions.
  Let us stress that in Definition~\ref{def:weak}
  the initial datum $u_0$ and the operator $G$ are allowed to satisfy 
  assumptions {\bf A4}--{\bf A5} only. In particular, neither $u_0$ 
  is required to be in $V$
  nor $G$ is required to be in $\cL^2(H,V)$. On the other hand, 
  one has much more freedom on the approximating sequences.
  These can indeed be taken either more regular 
  (e.g.~in $V$ and $\cL^2(H,V)$), or 
  simply set as $G_n=G$ for every $n$
  if one is interested 
  in the asymptotic behaviour 
  of analytically weak solutions instead.  
  Moreover, note that even if $(u_0^n)_n$ and $(G_n)_n$ are only in 
  $H$ and $\cL^2(H,H)$, respectively, the existence of analytically strong solution 
  for $n>0$ is guaranteed since $D(B_{\lambda_n})=H$ (see \cite{SS-doubly2 }).
\end{remark}

\begin{remark}\rm 
  Let us point out that if $(\Omega, \cF, (\cF_t)_{t\geq0}, \P, W, u, y,\Lambda)$
  is a Friedrichs-weak solution in the sense of Definition~\ref{def:weak} above,
  it  holds that 
  \begin{align}
  \label{we1}
  u(t)+k_A\int_0^tB(u(s))\,\d s=
  u_0 + 
  \int_0^t\left[k_AF(u(s))+ y(s)\right]\,\d s + \int_0^tG\,\d W(s) \quad\text{in } V^*
  \end{align}
  for every $t\geq0$, $\P$-almost surely.
  It is important to note however that one {\em cannot} infer that 
  $y=\C(u)$,
  due to the lack of regularity of the solution $u$ and the fact that
  $\C$ is defined on $D(\L)$. This weak notion of solution still
  features the identification of the nonlinearity $B$. In constrast,
  the limit of $\C$ is not identified and 
  the condition $y=\C(u)$ 
  needs to be interpreted in a very weak sense
  via the limiting procedure highlighted above. Equivalently, 
  this corresponds to relaxing the identification of 
  the nonlinearity $A$.
\end{remark}

Our first result is an existence proof for
Friedrichs-weak solutions.

\begin{thm}[Existence of Friedrichs-weak solutions]
  \label{thm1}
  Assume {\bf A1--A5}. Then,
   problem \eqref{eq1}--\eqref{eq3} admits at least a
  Friedrichs-weak solution in the sense of
  \emph{Definition~\ref{def:weak}} satisfying
  $u_0^n\in V$ and $G_n\in \cL^2(H,V)$ for all $n$.
\end{thm}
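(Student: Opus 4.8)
The plan is to realise a Friedrichs-weak solution precisely as a closure of analytically strong solutions, so that the proof reduces to four tasks: constructing suitable approximating data, proving uniform-in-$n$ a priori bounds, extracting a compactness limit, and identifying the nonlinearities. Since the notion in Definition~\ref{def:weak} already prescribes the approximating equation \eqref{eq:n}, I do not need to design the dynamics, only to choose the data and to control the limit.

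First I would fix the approximations. Using density of $V$ in $H$, pick $u_0^n\in V$ with $u_0^n\to u_0$ in $H$. Since $G$ commutes with $\L$ by \textbf{A4}, the spectral calculus lets me set, e.g., $G_n:=(I+\tfrac1n\L)^{-1}G$, which lies in $\cL^2(H,V)$ and satisfies $G_n\to G$ in $\cL^2(H,H)$ with $\norm{G_n}_{\cL^2(H,H)}\le\norm{G}_{\cL^2(H,H)}$; fix any $\lambda_n\searrow0$. For each fixed $n$ the coefficients are $B_{\lambda_n}$ (globally Lipschitz, with $D(B_{\lambda_n})=H$), $\C_{\lambda_n}$ (bounded and H\"older by \eqref{C_lam}--\eqref{bound_C_lam}) and $F$ (Lipschitz), while $u_0^n\in V$ and $G_n\in\cL^2(H,V)$, so \cite{SS-doubly2} provides an analytically strong solution $u_n$ to \eqref{eq:n}; I would realise all of these on a single filtered probability space carrying one cylindrical Wiener process $W$, which is legitimate because the noise is additive.

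Next come the uniform estimates, which I expect to be the heart of the matter. It\^o's formula for $\tfrac12\norm{u_n}_H^2$ along \eqref{eq:n}, together with monotonicity of $B_{\lambda_n}$, the bounds $\norm{F(\cdot)}_H\le C_F$ and \eqref{bound_C_lam}, the uniform control of $\norm{G_n}_{\cL^2(H,H)}$, a Gronwall argument and the Burkholder--Davis--Gundy inequality, gives $\E\sup_{[0,T]}\norm{u_n}_H^2\le C$. The monotone term yields moreover the dissipation estimate $\E\int_0^T\norm{J_{\lambda_n}u_n}_V^2\,\d t\le C$, where $J_{\lambda_n}:=(I+\lambda_n B)^{-1}$, because $(B_{\lambda_n}(x),x)_H\ge\norm{J_{\lambda_n}x}_V^2-C(1+\norm{x}_H)$. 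Finally \eqref{bound_C_lam} gives $\C_{\lambda_n}(u_n)$ uniformly bounded, hence weakly-$*$ convergent (up to a subsequence) to some $y$, in $L^\infty_\mP(\Omega\times(0,T);H)$. The crucial difficulty is that the dissipation controls only the \emph{resolvent-regularised} variable $J_{\lambda_n}u_n$ in $L^2(0,T;V)$, not $u_n$ itself: the defect $u_n-J_{\lambda_n}u_n=\lambda_n B_{\lambda_n}(u_n)$ need not be small in $V$, and because $G\notin\cL^2(H,V)$ one \emph{cannot} obtain a uniform $V$-bound by testing with $\L u_n$, since the resulting It\^o trace $\tfrac12\norm{G_n}_{\cL^2(H,V)}^2$ is not uniformly bounded. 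Overcoming this is where I would exploit the full structure of $A$ in \textbf{A1}: using $B_{\lambda_n}(u_n)=F(u_n)-A(\partial_t u_n^d)$ and the dissipation bound to control $B_{\lambda_n}(u_n)$ in $L^2(0,T;V^*)$, and then showing $\lambda_n B_{\lambda_n}(u_n)\to0$ in $L^2_\mP(\Omega;L^2(0,T;V^*))$, so that $J_{\lambda_n}u_n$ and $u_n$ share the same limit.

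For the passage to the limit I would combine the bound on $J_{\lambda_n}u_n$ in $L^2_\mP(\Omega;L^2(0,T;V))$ with a uniform fractional-in-time estimate in $V^*$ read off from \eqref{eq:n} (using additivity of the noise) and the compact embedding $V\embed H$ of \textbf{A2}, applying an Aubin--Lions--Simon argument to deduce that, along a subsequence, $u_n\to u$ in $L^2(0,T;H)$, $\P$-almost surely; weak compactness then gives $u_n\wstarto u$ in $L^2_w(\Omega;L^\infty(0,T;H))$ and $J_{\lambda_n}u_n\wto u$ in $L^2_\mP(\Omega;L^2(0,T;V))$, which in particular shows $u\in L^2(0,T;V)$. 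It remains to identify $B$: from $J_{\lambda_n}u_n\wto u$ in $L^2(0,T;V)$ one gets $\L J_{\lambda_n}u_n\wto\L u$ in $L^2(0,T;V^*)$, while the $\P$-a.s.\ strong $H$-convergence $J_{\lambda_n}u_n\to u$ together with boundedness and demiclosedness of $f$ gives $f(J_{\lambda_n}u_n)\wto f(u)$, so that $B_{\lambda_n}(u_n)=\L J_{\lambda_n}u_n+f(J_{\lambda_n}u_n)\wto B(u)$ in $V^*$. Passing to the limit in the It\^o formulation of \eqref{eq:n} then produces \eqref{we1}, with $y$ the (unidentified) weak limit of $\C_{\lambda_n}(u_n)$, so that $(\Omega,\cF,(\cF_t)_{t\ge0},\P,W,u,y,\Lambda)$ with $\Lambda=(\lambda_n)_n$ is the desired Friedrichs-weak solution, meeting the extra requirement $u_0^n\in V$ and $G_n\in\cL^2(H,V)$.
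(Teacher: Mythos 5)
Your construction of the approximating data coincides with the paper's ($u_0^n\in V$ by density, $G_n=(I+\tfrac1n\L)^{-1}G$, $\lambda_n\searrow 0$), but there are two genuine gaps in what follows. The decisive one is the compactness step. All your uniform estimates are in expectation, i.e.\ of the form $\sup_n\E[\,\cdot\,]\leq C$; a pathwise Aubin--Lions--Simon argument would require, for $\P$-a.e.\ fixed $\omega$, bounds on the $L^2(0,T;V)$-norm and on the time regularity that are uniform in $n$, and such $\omega$-wise bounds simply do not follow from bounds in expectation. Nor can you reach the $\P$-a.s.\ convergence $u_n\to u$ in $L^2(0,T;H)$ required by Definition~\ref{def:weak} through convergence in probability (Gy\"ongy--Krylov), because that criterion needs uniqueness of the limit --- exactly what fails for doubly nonlinear equations and is the raison d'\^etre of the paper. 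The paper's proof instead establishes tightness of the laws in $L^2(0,T;H)$ (via the compact embedding $C^0([0,T];H)\cap L^2(0,T;V)\embed L^2(0,T;H)$) and then invokes Prokhorov together with a Skorokhod-type representation by measure-preserving maps $\Gamma_n:\hat\Omega\to\Omega$: the whole approximating sequence, the Wiener processes, and the stochastic integrals are re-realised on a \emph{new} probability space $\hat\Omega$, where the a.s.\ convergence holds and where one must then re-identify $\hat W_n,\hat W$ as cylindrical Wiener processes and $\hat I_n=\int_0^\cdot G_n\,\d\hat W_n$, $\hat I=\int_0^\cdot G\,\d\hat W$ by martingale arguments. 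Your proposal omits this entirely; relatedly, your claim that all the $u_n$ can be realised on a single prescribed basis ``because the noise is additive'' is unjustified, since the existence result of \cite{SS-doubly2} is only probabilistically weak and no pathwise uniqueness is available for \eqref{eq:n} that would allow transferring solutions onto a given basis.

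The second issue is your literal use of the Yosida approximation $B_{\lambda_n}$ in \eqref{eq:n}. The paper's existence proof (and its use in Section~\ref{sec:uniq}) keeps the \emph{full} operator $B=\L+f$ in the approximating problems and regularises only $\C\to\C_{\lambda_n}$; this is also the only reading compatible with Definition~\ref{def:weak} itself, whose convergence requirement $u_n\wstarto u$ in $L^2_\mP(\Omega;L^2(0,T;V))$ presupposes $u_n\in L^2(0,T;V)$. Solutions of the $B_{\lambda_n}$-equation have no reason to enjoy this membership: the drift of that equation is merely $H$-valued and there is no parabolic smoothing, so even with $u_0^n\in V$ and $G_n\in\cL^2(H,V)$ one only gets $u_n(t)\in H$, and neither your $J_{\lambda_n}u_n$ bookkeeping nor the defect estimate $\lambda_nB_{\lambda_n}(u_n)\to0$ in $V^*$ repairs the fact that the approximants themselves fail the regularity demanded by the definition. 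With the full $B$, by contrast, the It\^o formula for $\tfrac12\|u_n\|_H^2$ produces the term $k_A\int_0^t\|u_n\|_V^2\,\d s$ on the left-hand side directly, i.e.\ the uniform bound on $u_n$ itself in $L^2_\mP(\Omega;C^0([0,T];H)\cap L^2(0,T;V))$, and the entire resolvent detour disappears. In short: adopt the full-$B$ approximating equation and insert the Prokhorov--Skorokhod step; your outline then matches the paper's argument.
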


Our main result concerns the uniqueness in
distribution for Friedrichs-weak solutions. For this purpose, 
we need the following additional structural assumption.

\begin{description}
\item[A6] there exists $\delta\in(0,\frac12)$ such that 
  $G(H)=V_{2\delta}$
 and 
  \[
  s_A  + \frac2{1+2\delta} >2\,.
  \] 
\end{description}

  Note that the condition $\delta<\frac12$ in {\bf A6}
  implies that the range of $G$ cannot be $V$, and one has 
  $V\embed G(H)$ compactly, hence $G\notin\cL^2(H,V)$.
  On the other hand,
  the requirement $\delta>0$ is redundant 
  as it is trivially implied by the Hilbert--Schmidt conditions on $G$.
  All in all, it holds that $V\embed G(H)\embed H$ compactly.
  Moreover, we stress that for all 
  $\delta\in(0,\frac12)$, assumption {\bf A6}
  is satisfied with $s_A\in(\frac{4\delta}{1+2\delta},1)$. We have
  the following.

\begin{thm}[Uniqueness in law]
  \label{thm2}
  Assume {\bf A1--A6}, and let
  \[
  (\Omega, \cF, (\cF_t)_{t\geq0}, \P, W, u_1, y_1,\Lambda_1) \qquad\text{and}\qquad
  (\Omega, \cF, (\cF_t)_{t\geq0}, \P, W, u_2, y_2, \Lambda_2)
  \]
  be two Friedrichs-weak solutions to problem \eqref{eq1}--\eqref{eq3}
  with respect to the same initial datum $u_0$
  and defined on the same stochastic basis.
  If $\Lambda_1=\Lambda_2$, then $u_1$ and $u_2$ have the same law on 
  $C^0([0,+\infty); H)$, i.e.
  \[
  \P\left(u_1 \in E\right)=\P\left(u_2\in E\right) \quad\forall\,E\in\mathcal B(C^0([0,+\infty); H))\,.
  \]
\end{thm}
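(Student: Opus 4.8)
The plan is to identify the law of any Friedrichs-weak solution through the hybrid Kolmogorov equation \eqref{eq0'_kolm}, exploiting that, since $\Lambda_1=\Lambda_2=:(\lambda_n)_n$, the two solutions can be tested against one and the same family $(\varphi_{\lambda_n})_n$ of its solutions. Fix a resolvent parameter $\alpha>0$ and a test function $g$ in the class for which Section~\ref{sec:kolm} solves \eqref{eq0'_kolm}, and for each $n$ let $\varphi_{\lambda_n}\in C^1_b(H)$ be the corresponding solution. The crucial point is that $\varphi_{\lambda_n}$ is built from $g$, from $\lambda_n$, and from the \emph{limiting} coefficient $G$ only; it does not see the approximating data $(u_0^n,G_n)$ of either solution and is thus common to $u_1$ and $u_2$.

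For one of the two solutions, with approximating solutions $u_n$ of \eqref{eq:n} and data $(u_0^n,G_n)$, I would apply the It\^o formula to $s\mapsto\varphi_{\lambda_n}(u_n(s))$ and use \eqref{eq0'_kolm} to substitute the drift. Because the It\^o correction carries the covariance $G_nG_n^*$ of $u_n$ whereas \eqref{eq0'_kolm} carries $GG^*$, the drift cancels up to a second-order remainder, and one obtains, the stochastic integral having zero mean since $\varphi_{\lambda_n}\in C^1_b(H)$ and $G_n\in\cL^2(H,H)$,
\begin{align*}
  \E\varphi_{\lambda_n}(u_n(t))
  &=\varphi_{\lambda_n}(u_0^n)
  +\E\int_0^t\big[\alpha\varphi_{\lambda_n}(u_n(s))-g(u_n(s))\big]\,\d s\\
  &\quad+\frac12\,\E\int_0^t\operatorname{Tr}\big[(G_nG_n^*-GG^*)\,D^2\varphi_{\lambda_n}(u_n(s))\big]\,\d s\,.
\end{align*}

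Next I would let $n\to\infty$. The uniform bound of $(\varphi_{\lambda_n})_n$ in $C^1_b(H)$ gives equicontinuity; since by Definition~\ref{def:weak} the $u_n$ converge to $u$ in $L^2(0,T;H)$ $\P$-a.s.\ and are bounded in $L^2(0,T;V)$ with $V\embed H$ compactly, the relevant values lie in $H$-compact sets and, along a common subsequence, $\varphi_{\lambda_n}\to\varphi$ uniformly there. Together with $u_0^n\to u_0$, the boundedness and continuity of $g$, and the vanishing of the remainder discussed below, all terms pass to the limit and yield
\[
  \E\varphi(u(t))=\varphi(u_0)+\E\int_0^t\big[\alpha\varphi(u(s))-g(u(s))\big]\,\d s\,.
\]
Here $\varphi=\varphi_g$ depends only on $g$ and the data, so this identity is \emph{the same} for $u_1$ and $u_2$. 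Running the computation conditionally on $\cF_s$ shows in addition that $t\mapsto\varphi_g(u(t))-\varphi_g(u_0)-\int_0^t[\alpha\varphi_g(u(r))-g(u(r))]\,\d r$ is a martingale for every admissible $g$; since $g\mapsto\varphi_g$ is the common limit of the resolvent maps solving \eqref{eq0'_kolm}, this is a resolvent martingale problem whose well-posedness is inherited from that of the hybrid diffusions underlying \eqref{eq0'_kolm}. Its unique solution forces the one-time marginals of $u_1$ and $u_2$ to agree, and the Markov/semigroup structure surviving in the limit propagates this, by induction on the number of time points, to all finite-dimensional distributions; continuity of the two trajectories then gives equality of the laws on $C^0([0,+\infty);H)$.

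The main obstacle is the vanishing, as $n\to\infty$, of the second-order remainder $\frac12\E\int_0^t\operatorname{Tr}[(G_nG_n^*-GG^*)D^2\varphi_{\lambda_n}(u_n)]\,\d s$. The bare $C^1_b$ bound does not suffice, because $D^2\varphi_{\lambda_n}$ need not be bounded in $\cL(H,H)$; here the structural assumption {\bf A6} must enter. Writing the trace with $\L^{\delta}$-weights around $G_nG_n^*-GG^*$, and using that $G_n\to G$ in $\cL^2(H,H)$ with both commuting with $\L$, the weighted kernel $\L^{\delta}(G_nG_n^*-GG^*)\L^{\delta}$ tends to $0$ in trace norm; against it one needs the weighted bound $\|\L^{-\delta}D^2\varphi_{\lambda_n}\L^{-\delta}\|_{\cL(H,H)}\le C$, uniform in $n$, which is precisely where the balance $s_A+\tfrac2{1+2\delta}>2$ is consumed: it forces the smoothing of the coloured Ornstein--Uhlenbeck semigroup attached to $G$ to dominate the $s_A$-H\"older roughness of $\C_{\lambda_n}$, so that the resolvent integral defining this weighted second derivative converges uniformly. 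Securing this estimate, together with the fact that the extracted $\varphi$ is genuinely independent of the approximating data, is the technical heart of the argument.
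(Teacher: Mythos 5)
Your overall strategy coincides with the paper's: since $\Lambda_1=\Lambda_2$, both solutions are tested against the \emph{same} family $(\varphi_{\lambda_n})_n$ of solutions to the hybrid Kolmogorov equation driven by the limiting coefficient $G$; It\^o's formula along the approximating strong solutions plus substitution of the Kolmogorov equation produces the identity with the trace remainder, one passes to the limit along a common subsequence (the paper does this via Proposition~\ref{prop:lim_kolm}, an Ascoli--Arzel\`a argument on $V$-balls exactly as you sketch), and the limit identity determines the law. The paper finishes differently from your martingale-problem induction: it multiplies by $e^{-\alpha t}$, lets $t\to+\infty$ to obtain $\varphi(u_0)=\E\int_0^{+\infty}e^{-\alpha t}g(u_i(t))\,\d t$ for all $g\in C^{0,s_A}_b(H)$, $i=1,2$, and then invokes \cite[Sec.~4.1]{BOS_uniq}; this is a cosmetic difference, not the issue.

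The genuine gap is in your treatment of what you correctly call the technical heart, and it is twofold. First, you apply It\^o's formula to $\varphi_{\lambda_n}(u_n)$ while granting only $\varphi_{\lambda_n}\in C^1_b(H)$, and you then assert that $D^2\varphi_{\lambda_n}$ ``need not be bounded in $\cL(H,H)$''. The paper proves exactly the opposite, and must: by Schauder estimates for the Ornstein--Uhlenbeck resolvent \cite[Prop.~6.4.2]{DapZab2}, the mild solution is a \emph{classical} solution of class $C_b^{s_A+\frac{2}{1+2\delta}}(H)\subset C^2_b(H)$, with a bound $\norm{\varphi_\lambda}_{C^2_b(H)}\leq C_2$ \emph{uniform in} $\lambda$ (Proposition~\ref{prop:kolm}, items (v)--(vi), estimate \eqref{est4}); this is precisely where {\bf A6}, i.e.\ $s_A+\frac{2}{1+2\delta}>2$, is consumed, and it is also what legitimizes the It\^o step in the first place. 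Second, your proposed substitute --- $\L^{\delta}$-weighted traces together with $\L^{\delta}(G_nG_n^*-GG^*)\L^{\delta}\to0$ in trace norm --- is not available in the generality of Theorem~\ref{thm2}: in Definition~\ref{def:weak} the operators $G_n$ are arbitrary subject only to $G_n\to G$ in $\cL^2(H,H)$; assumption {\bf A6} constrains the range of the \emph{limit} $G$, not of the $G_n$, and the $G_n$ need neither commute with $\L$ nor map into $D(\L^{\delta})$, so your weighted kernel need not even be defined, let alone converge. With \eqref{est4} in hand none of this is needed, since
\[
\left|\operatorname{Tr}\left[(GG^*-G_nG_n^*)D^2\varphi_{\lambda_n}\right]\right|
\leq C_2\norm{GG^*-G_nG_n^*}_{\cL^1(H,H)}
\leq C_2\norm{G-G_n}_{\cL^2(H,H)}\left(\norm{G}_{\cL^2(H,H)}+\norm{G_n}_{\cL^2(H,H)}\right)\to0\,.
\]
A minor further omission: in the paper's regularised Kolmogorov equation \eqref{eq:kolm_reg} the nonlinearity $f$ appears through its Yosida approximation $f_\lambda$, while the approximating SPDE carries $f$ itself, so the substitution leaves a mismatch term $\E\int_0^t(f_{\lambda_n}(u_n)-f(u_n),D\varphi_{\lambda_n}(u_n))_H\,\d s$, disposed of by dominated convergence and \eqref{est1}; your claimed exact cancellation of the drift skips this.
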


As a corollary of Theorem~\ref{thm2} we are able to establish 
a stability-by-noise result.
Indeed, 
let $\Lambda=(\lambda_n)_n$ be an infinitesimal sequence of positive real numbers, and let
$(u_0^n, G_n)_n\subset H\times \cL^2(H,H)$ such that $u_0^n\to u_0$ in $H$ and
$G_n\to G$ in $\cL^2(H,H)$.
Given the sequences $(B_{\lambda_n}, \C_{\lambda_n})_n$ of approximated operators,
problem \eqref{eq:n}  cannot be assumed to show the 
uniqueness in law property,
even if the approximating operators $(\C_{\lambda_n})_n$ are H\"older
continuous on $H$. The reason for this is
that $G_n$ may be in $\cL^2(H,V)$
(hence the Ornstein--Uhlenbeck semigroup associated to
$\L$ and $G_n$ may not be {\it strong Feller}).
Let us set then 
\[
  \mathcal S_n:=\left\{\text{analytically strong solutions of \eqref{eq:n}
  in the sense of Def.~\ref{def:strong_equiv}}\right\}
\]
and let $\cP_n$ be their respective distributions on $C^0([0,+\infty); H)$, namely
\begin{align*}
  \cP_n:=&\left\{\mu\in\cP(C^0([0,+\infty); H)):\;\exists\,
  (\Omega, \cF, (\cF_t)_{t\geq0}, \P, W, u)\in\mathcal S_n:\right.\\
  &\quad\left.
  \mu(E)=\P(u\in E) \quad\forall\,E\in \mathcal B(C^0([0,+\infty); H))\right\}\,.
\end{align*}
This means that for every $n\in\enne$, $\cP_n$ represents the set of 
distributions on $C^0([0,+\infty); H)$ of all the analytically strong solutions of \eqref{eq:n}.
The weak stability-by-noise result is the following.
\begin{thm}[Weak stability]
  \label{thm3}
  Assume {\bf A1--A6} and let
  \[
  (\lambda_n, u_0^n, G_n)_n\subset (0,+\infty)\times H\times \cL^2(H,H)
  \]
  be such that, as $n\to\infty$, $\lambda_n\searrow0$, $u_0^n\to u_0$ in $H$, and
 $G_n\to G$ in $\cL^2(H,H)$.
 Then, there exists a unique probability $\mu\in\cP(L^2_{loc}(0,+\infty; H))$
 such that for every sequence $(\mu_n)_n\subset\cP(C^0([0,+\infty); H))$ satisfying 
 \[
 \mu_n\in\cP_n \quad\forall\,n\in\enne\,,
 \]
  as $n\to\infty$, it holds that $\mu_n\wstarto\mu$ in $L^2_{loc}(0,+\infty; H)$, i.e.,
 \[
   \lim_{n\to\infty}\int_{L^2_{loc}(0,+\infty; H)}\varphi\,\d\mu_n 
   = \int_{L^2_{loc}(0,+\infty; H)}\varphi\,\d\mu
   \qquad\forall\,\varphi\in C^0_b\left(L^2(0,+\infty; H)\right)\,.
 \]
 Moreover, $\mu$ is concentrated on $C^0([0,+\infty); H)\cap L^2_{loc}(0,+\infty; V)$.
\end{thm}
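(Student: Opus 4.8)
The plan is to deduce Theorem~\ref{thm3} from the uniform estimates behind the existence result (Theorem~\ref{thm1}) together with the uniqueness-in-law statement (Theorem~\ref{thm2}), through a Prokhorov--Skorokhod compactness scheme. Throughout, each $\mu_n\in\cP_n$ is regarded as a Borel measure on the separable, completely metrizable space $L^2_{loc}(0,+\infty;H)$ via the continuous embedding $C^0([0,+\infty);H)\embed L^2_{loc}(0,+\infty;H)$, and weak convergence as well as tightness are tested on this Polish space; tightness in $L^2_{loc}(0,+\infty;H)$ reduces to tightness in $L^2(0,T;H)$ for each $T>0$.

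First I would establish a~priori bounds uniform in $n$ and in the choice of representative. Fixing, for each $n$, an analytically strong solution $u_n$ of \eqref{eq:n} with $\mathcal L(u_n)=\mu_n$ (such solutions exist by the remark following Definition~\ref{def:weak}), It\^o's formula for $\tfrac12\norm{u_n}_H^2$ together with the monotonicity of $B_{\lambda_n}$, the bound \eqref{bound_C_lam} on $\C_{\lambda_n}$, assumption \textbf{A3}, and $\sup_n\norm{G_n}_{\cL^2(H,H)}<\infty$ (a consequence of $G_n\to G$) yields
\[
  \E\sup_{t\in[0,T]}\norm{u_n(t)}_H^2 + \E\int_0^T\norm{u_n(s)}_V^2\,\d s\le C(T)
\]
uniformly in $n$. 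Combining the $L^2(0,T;V)$ bound with a fractional-in-time estimate for $u_n-\int_0^\cdot G_n\,\d W$ and for the stochastic convolution, an Aubin--Lions--Simon argument then shows that $\{\mu_n\}_n$ is tight in $L^2(0,T;H)$ for every $T$, hence in $L^2_{loc}(0,+\infty;H)$.

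By Prokhorov's theorem, every subsequence of $(\mu_n)_n$ has a weakly convergent sub-subsequence. Along it, I would track the joint laws of $u_n$ together with its driving Wiener process (and of $\C_{\lambda_n}(u_n)$, bounded in $L^\infty$) and invoke the Skorokhod representation theorem to realise, on a single probability space, almost surely convergent copies; repeating the identification step of the proof of Theorem~\ref{thm1} then shows that the subsequential limit is the law of the $u$-component of a Friedrichs-weak solution to \eqref{eq1}--\eqref{eq3} with the prescribed datum $u_0$ and sequence $\Lambda=(\lambda_n)_n$. In particular, by Theorem~\ref{thm1} at least one such limit exists, while Theorem~\ref{thm2} forces all Friedrichs-weak solutions sharing $u_0$ and $\Lambda$ to have the same $u$-law. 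Consequently every subsequential limit of $(\mu_n)_n$ equals one and the same measure $\mu$, and the subsequence principle upgrades this to $\mu_n\wstarto\mu$ for the whole sequence and for every choice $\mu_n\in\cP_n$. The regularity assertion is immediate, since each Friedrichs-weak solution satisfies $u\in L^2_\mP(\Omega;C^0([0,T];H))\cap L^2_\mP(\Omega;L^2(0,T;V))$, so that $\mu$ is concentrated on $C^0([0,+\infty);H)\cap L^2_{loc}(0,+\infty;V)$.

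The main obstacle I anticipate is the passage from Theorem~\ref{thm2}, which compares two Friedrichs-weak solutions living on one common stochastic basis and driven by the same Wiener process, to subsequential limits that are a~priori produced on different Skorokhod spaces with different limiting noises. To reconcile these I would run the compactness argument not on single solutions but on pairs, tracking the joint law of two competing approximating sequences together with the one underlying Wiener process, so that the Skorokhod limit delivers two Friedrichs-weak solutions on a single basis with a common $\tilde W$; Theorem~\ref{thm2} then applies verbatim and identifies their laws, which is exactly what closes the argument. Matching the common-noise structure required by Definition~\ref{def:weak} in this joint construction is the delicate technical point.
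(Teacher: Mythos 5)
Your overall scheme --- uniform energy estimates for representatives $u_n$ with $\mathcal L(u_n)=\mu_n$, tightness in $L^2_{loc}(0,+\infty;H)$, Prokhorov--Skorokhod extraction, identification of every subsequential limit as the law of the $u$-component of a Friedrichs-weak solution with datum $u_0$ and sequence $\Lambda$, and finally uniqueness plus the subsequence principle --- is exactly the paper's strategy (the paper presents this step compressed, quoting the arguments of Section~\ref{sec:exist} for tightness). The genuine gap is in the step you yourself flag as ``delicate'': the reduction to Theorem~\ref{thm2}. That theorem compares two Friedrichs-weak solutions defined on \emph{one} stochastic basis and driven by \emph{one and the same} Wiener process $W$; the same requirement is built into Definition~\ref{def:weak}, where the approximating strong solutions must be driven by the same $W$ as the limit. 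Your proposed repair --- a joint Skorokhod construction on pairs so as to produce two limits ``on a single basis with a common $\tilde W$'' --- cannot be carried out: the two competing approximating sequences live on unrelated probability spaces, so the only canonical joint realization is the product one, under which the two driving noises are independent; after Skorokhod one obtains two Friedrichs-weak solutions on a common space but driven by two \emph{distinct} (indeed independent) cylindrical Wiener processes. Theorem~\ref{thm2} does not apply verbatim to such a pair, and no coupling will merge two independent noises into one. As written, your argument therefore does not close.

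The way out, and what the paper actually does, is to invoke the \emph{proof} of Theorem~\ref{thm2} rather than its statement. The Kolmogorov-equation argument is run solution by solution and never uses the common basis: for \emph{every} Friedrichs-weak solution $u$ with initial datum $u_0$ and sequence $\Lambda$, on an \emph{arbitrary} stochastic basis, It\^o's formula for $\varphi_{\lambda_{n_k}}(u_{n_k})$ and the passage to the limit yield the identity \eqref{eq_uniq}, i.e.
\[
  \E\int_0^{+\infty}e^{-\alpha t}g(u(t))\,\d t \;=\; \varphi(u_0)
  \qquad\forall\,g\in C^{0,s_A}_b(H)\,,\ \forall\,\alpha>\alpha_0\,,
\]
where $\varphi$ is the limit function of Proposition~\ref{prop:lim_kolm}, depending only on $A$, $B$, $F$, $G$, and $\Lambda$ --- not on the solution, the basis, or the noise. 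This pins the law on $C^0([0,+\infty);H)$ of every subsequential limit of $(\mu_n)_n$ (and of any other admissible choice of laws in $\cP_n$) to one and the same measure $\mu$, with no coupling needed. Once your pairing step is replaced by this observation, the remainder of your argument --- tightness, identification of limits, the subsequence principle, and the concentration of $\mu$ on $C^0([0,+\infty);H)\cap L^2_{loc}(0,+\infty;V)$ inherited from the regularity of Friedrichs-weak solutions --- goes through and coincides with the paper's proof.
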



\section{An explicit example}
\label{sec:non_uniq}

\subsection{Nonuniqueness for a deterministic doubly nonlinear equation}
The uniqueness-in-law result of Theorem \ref{thm2} is of a genuinely
stochastic nature and has no deterministic counterpart. In fact,
deterministic doubly nonlinear evolution equations may fail to have
unique solutions \cite{Colli,Colli-Visintin}. In order to
illustrate this fact, we follow {\sc Akagi}\cite{Akagi10} and present in this section the concrete example
of the deterministic parabolic PDE
\begin{equation}
  \label{eq:akagi0}
  |\partial_t u|^{p-2}\partial_t u -\Delta u =\lambda u \quad
  \text{in} \ (0,T)	\times D\,.
\end{equation}
Here, $D \subset {\mathbb R}^d$ $(d\in \Nz)$ is a nonempty, open, and bounded
domain with smooth boundary $\partial D$ and $p>2$. 
Note that equation \eqref{eq:akagi0} features a single
nonlinearity acting on the time derivative, which 
does not satisfy our assumption {\bf A1} in general: this issue
is
overcome at the end of the section where a modified version of 
\eqref{eq:akagi0} is  presented.
We complement equation \eqref{eq:akagi0} by
homogeneous Dirichlet boundary conditions and by a null
initial condition,
namely,
\begin{align}
  &u=0 \quad \text{on} \  (0,T)\times\partial D\,, \label{eq:akagi1}\\
  &u(\cdot,0)=0 \quad\text{on} \ D \,. \label{eq:akagi2}
\end{align}
Assuming that $\lambda > \lambda_1$ where  $\lambda_1>0$ is the first eigenvalue of the Laplacian
on $D$ with homogeneous Dirichlet conditions,
Theorem 2.1 in \cite{Akagi10}
proves that equation
\eqref{eq:akagi0} together with conditions
\eqref{eq:akagi1}--\eqref{eq:akagi2} has infinitely many strong
solutions.

Indeed, one looks for
solutions to problem \eqref{eq:akagi0}--\eqref{eq:akagi2}
having the form $u(t,x) = \theta(t)v(x)$. Here, one lets $v=0$ on
$\partial D$ and $\theta (0)=0$, so that conditions
\eqref{eq:akagi1} and \eqref{eq:akagi2} are respectively satisfied.
Equation 
\eqref{eq:akagi0} then reads
$$|\dot \theta|^{p-2}\dot \theta |v|^{p-2}v -\theta \Delta v = \theta \lambda
v \quad \text{in} \  (0,T)\times D\,.$$
One finds a solution to the latter by splitting the variables and
solving the decoupled system
\begin{alignat}{2}
  \dot \theta&=|\theta|^{p'-2}\theta \qquad&&\text{in} \ (0,T)\,,\label{eq:akagi3}\\
  -\Delta v + |v|^{p-2}v &=\lambda v \qquad&&\text{in} \ D\,, \label{eq:akagi4}
\end{alignat}
where $1/p'+1/p=1$. For all $t_*\in [0,T]$, the functions $\theta_*(t)=c_p((t-t_*)^+)^{1/(2-p')}$ with $c_p=(2-p')^{1/(2-p')}>0$
solve \eqref{eq:akagi3} with $\theta_*(0)=0$. For
later purposes, we remark that for all such functions $\theta_*$ one has
$\max_{[0,T]}|\theta_*| \leq c_p T^{1/(2-p')}=:M_1.$

As far as equation \eqref{eq:akagi4} is
concerned, one considers the minimization of the functional
$$I(v) = \int_D \left(\frac12 |\nabla v|^2 +\frac1p |v|^p -\frac{\lambda}{2}
| v|^2 \right) {\rm d} x$$
on $H^1_0(D)$. As $p>2$, global minimizers $v^*$ can be
proved to exists and  to be strong
solutions to  \eqref{eq:akagi4} with homogeneous Dirichlet boundary conditions. On the other
hand, setting $v_1\not \equiv 0$ to be the eigenfunction
related to the first eigenvalue $\lambda_1$ of the Laplacian  with
homogeneous Dirichlet boundary conditions and using the fact that
$\| \nabla v_1\|^2_{L^2(D;{\mathbb R}^d)} = \lambda_1 \|  v_1\|^2_{L^2(D )} $,  for all $s>0$ one has that
\begin{align*}
  I(sv_1) = \int_D \left(s^2\frac{\lambda_1-\lambda}{2}|v_1|^2  
+\frac{s^p}{p} |v_1|^p \right) {\rm d} x\,.
\end{align*}
As $\lambda_1-\lambda<0$, one has
that $I(sv_1)<0$ for $s$ small enough. In particular, $I(v^*)<0$ on
the global minimizer,  which implies that $v_*\not =0$. This proves that \eqref{eq:akagi4}
  has at least two distinct nontrivial solutions, namely,
$v_*$ and $-v_*$. All in all, we have checked that
\eqref{eq:akagi0}--\eqref{eq:akagi2} has at least the solutions
\begin{equation}
    u_*(t,x) = \pm v_*(x)
(2-p')^{1/(2-p')}((t-t_*)^+)^{1/(2-p')}\quad \forall (x,t)\in \Omega
\times [0,T].\label{eq:akagi5}
\end{equation}
for any given $ t_*\in [0,T]$. Note that $u_*\equiv 0$ for $t_*=T$.

Before moving on, in order to provide a nonuniqueness deterministic
counterexample which still fits to our assumptions,
let us remark that any solution $v_*\in H^1_0(D)$ of
\eqref{eq:akagi4} is bounded in $C^0(\overline{D})$ in terms of
data. Given any $q>1$, multiply equation \eqref{eq:akagi4} by $|v_*|^{q-2}v_*$
and integrate in space and by parts. By using the fact that 
$$-\int_D \Delta v_* \, |v_*|^{q-2}v_* \, {\rm d} x = (q-1)
\int_D |v_*|^{q-2}|\nabla v_*|^2\, {\rm d} x \geq 0$$
one obtains
$$\int_D |v_*|^{p+q-2}\, {\rm d} x \leq \lambda \int_D
|v_*|^{q}\, {\rm d} x ,$$
which implies that $v_*$ is bounded in $L^{p+q-2}(D)$ in terms of
data. This can indeed be made rigorous by a bootstrap
argument on $q$. In particular, one has that $v_*$ is bounded in $L^r(D)$ in terms of data for any $r>1$. Choose now $r>d(p-1)/2$. 
By a comparison in  
\eqref{eq:akagi4} one has that $-\Delta v_*$ is
bounded in $L^{r/(p-1)}(D)$ in terms of data, as well. By elliptic
regularity and the fact that $r/(p-1)>d/2$ one gets that $v_*$ is bounded in
$C^0(\overline{D})$ in terms of data. More precisely, there
exists $M_2=M_2(D,p,\lambda)$ such that $\|v\|_{C^0(\overline{D})}
\leq M_2$.

At this point, we are ready to present a deterministic counterexample 
to uniqueness, which is fitting within our setting.
Consider  the equation
\begin{equation}
  \label{eq:akagi6}
  \alpha(\partial_t u ) - \Delta u =\ell(u) \quad
  \text{in} \  (0,T)\times D\,,
\end{equation}
where the linearly bounded monotone mapping $\alpha\in C^1({\mathbb R})$ is
defined by
\begin{equation} 
  \alpha(x):=
  \left\{
    \begin{array}{ll}
      |x|^{p-2}x\quad&\text{for} \ |x|\leq M \,,\\
      (p-1)M^{p-2}x - (p-2) M^{p-1}\displaystyle\frac{x}{|x|} \quad&\text{for} \ |x|>M\,,
    \end{array}
    \right.\label{eq:akagi7}
  \end{equation}
  and $\ell\in C^{0,1}(\Rz)$ is given by
\begin{equation} 
  \ell(x):=
  \left\{
    \begin{array}{ll}
      \lambda x\quad&\text{for} \ |x|\leq M \,,\\
      \lambda M x/|x|\quad&\text{for} \ |x|>M\,,
    \end{array}
    \right.\label{eq:akagi8}
  \end{equation}
for $M:=M_1M_2$.
The same argument leading to \eqref{eq:akagi5} ensures that equation
\eqref{eq:akagi6} with conditions \eqref{eq:akagi1}--\eqref{eq:akagi2}
  has infinite many solutions. In fact, for all $u_*$ from
  \eqref{eq:akagi5} one has $|u_*|=|\theta_*||v_*| \leq M_1M_2=M$, so that
  $\alpha(\partial_t u_*) = |\partial_t u_* |^{p-2}\partial_t u_*$ and
  $\ell(u_*)= \lambda u_*$. We hence have that 
  equations \eqref{eq:akagi0} and \eqref{eq:akagi6} coincide and $u_*$
  solves \eqref{eq:akagi6}, as well. Note that the truncations allow the operators 
  $\alpha$ and $\ell$ to satisfy {\bf A1}--{\bf A3}.
  
\subsection{Restoring uniqueness by noise}
Before closing this section, let us consider the stochastic counterpart 
of the deterministic equation \eqref{eq:akagi6}. In the stochastic case, equation \eqref{eq:akagi6}
together with the boundary and initial conditions
\eqref{eq:akagi1}--\eqref{eq:akagi2} can be 
variationally formulated as \eqref{eq1}--\eqref{eq3} by
letting $H=L^2(D)$, $A(v)(x) = \alpha(v(x))$ where $\alpha$ is
given in \eqref{eq:akagi7}, $B= -\Delta$ with 
$D(B)=V_2=H^2(D)\cap H^1_0(D)$, $f=0$, and $F(v)=\ell(v)$ where
$\ell$ is given in \eqref{eq:akagi8}. By letting
$k_A:=M^{2-p}/(p-1)$ we have 
$$A^{-1}(v)(x) =
\left\{
  \begin{array}{ll}
    |v(x)|^{p'-2}v(x)&\quad \text{if} \ |v(x)|\leq M^{p-1}\,,\\[2mm]
                       k_A \left(v(x) +(p-2) M^{p-1}
    \displaystyle \frac{v(x)}{|v(x)|}\right)  &\quad \text{if} \ |v(x)|> M^{p-1}\,.
\end{array}
\right.
$$
One easily checks that $A^{-1} \in C^{0,p'-1}(H;H)$
(recall that $p>2>p'>1$) and 
\begin{align*}
  &\sup_{v\in H}\| A^{-1}(v) - k_Av\|_H^2 \\
  &\quad =\sup_{v\in H} \left(\int_{\{|v|\leq
    M^{p-1}\}} ||v|^{p'-2}v - k_A v|^2 \, {\rm d} x  +
    \int_{\{|v|>M^{p-1}\}}|k_A(p-2)M^{p-1}|^2 \, {\rm d } x\right)\\
  &\quad \leq |D| (M+k_AM^{p-1})^2  +
    |D|(k_A(p-2)M^{p-1})^2  =:(C'_A)^2<+\infty.
\end{align*}
At the same time one has that $F\in C^{0,1}_b(H,H)$ with
\begin{align*}
  &\|F\|_{C^{0,1}_b(H;H)}^2\leq 2\sup_{v\in H}\int_D |\ell(v) |^2 
    {\rm d} x + 2 \sup_{u \not = v \in H}\left(\int_D |\ell(u)-
    \ell(v)|^2 {\rm d} x\right)\left(\int_D |u-v|^2 {\rm d} x\right)^{-1}\\
  &\leq 2| D| M^2 + 2\lambda^2
    =:C_F^2<+\infty\,.
\end{align*}
Letting $G$ fulfill {\bf A4}, we have that 
condition in  {\bf A6} holds for any $\delta \in (0,1/3)$ by
taking $p>2$ large enough. Given  $u_0=0$ we have proved that 
Friedrichs-weak solution to \eqref{eq1}--\eqref{eq3} exist and are
unique in law in the sense of Theorem \ref{thm2}. On the other hand,
taking $G=0$, the deterministic equation \eqref{eq:akagi6} with $u_0=0$
has infinitely many solutions.

\section{Existence of Friedrichs-weak solutions}
\label{sec:exist}
This section is devoted to the proof of Theorem~\ref{thm1}
concerning the 
existence of Friedrichs-weak solutions.

First of all, since $V\embed H$ densely, there exists a sequence $(u_0^n)_n\subset V$
such that $\|u_0^n-u_0\|_H\to 0$ as $n\to\infty$. Moreover, let us fix a sequence $\Lambda=(\lambda_n)_n$ such that $\lambda_n\searrow0$ as $n\to \infty$.
Let us construct a sequence $(G_n)_n\subset \cL^2(H,V)$ such that 
$\|G_n-G\|_{\cL^2(H,H)}\to 0$ as $n\to\infty$.
To this end, for every $x\in H$ and $n\in\enne$
we define $G_nx:=x_n\in H$ as the unique 
solution to the singular perturbation problem 
\[
  x_n + \frac1n \L x_n = G x\,.
\]
In a more compact form, this means that we define 
$G_n:=(I+\frac1n\L)^{-1}G$, so that clearly 
$G_n\in \cL^2(H,V)$ for every $n\in\enne$. 
Let us show the convergence as $n\to\infty$.
Testing the equation above by $x_n$ yields directly
\[
  \frac12\| G_nx\|_H^2 + 
  \frac1n\| G_nx\|_V^2 \leq \frac12\|Gx\|_H^2 \quad\,\forall\,x\in H\,,
\]
so that
\[
  \limsup_{n\to\infty}\|G_nx\|_H^2\leq\|Gx\|_H^2 \quad\,\forall\,x\in H\,,
\]
hence $G_nx\to Gx$ in $H$ for every $x\in H$.
Moreover, given a 
complete orthonormal system $(e_j)_j$ of $H$,
choosing $x=e_j$, and summing over $j\in\enne$, we also obtain 
\[
  \frac12\| G_n\|_{\cL^2(H,H)}^2 + 
  \frac1n\|G_n\|_{\cL^2(H,V)}^2 \leq \frac12\|G\|_{\cL^2(H,H)}^2\,.
\]
This implies analogously that $ G_n\to G$ in $\cL^2(H,H)$ as $n\to\infty$.

Since $\C_{\lambda_n}$ and $F$ are Lipschitz-continuous on $H$
for every $n\in\enne$, there exists an
analytically strong solution $(\Omega, \cF, (\cF_t)_{t\in[0,T]}, \P, W, u_n)$
in the sense of Definition~\ref{def:strong_equiv} for the problem
with $\C$ replaced by $\C_{\lambda_n}$, i.e.,
\[
  u_n\in L^2_\mP(\Omega; C^0([0,T]; H))\cap 
  L^2_w(\Omega; L^\infty(0,T; V))\cap 
  L^2_\mP(\Omega;L^2(0,T; V_2))
\]
and
\[
  u_n(t)+k_A\int_0^tB(u_n(s))\,\d s=
  u_0^n + \int_0^t\left[k_AF(u_n(s))+  
  \C_{\lambda_n}(u_n(s))\right]\,\d s + \int_0^tG_n\,\d W(s) \quad\text{in } H\,,
\]
for every $t\geq0$, $\P$-almost surely.

Now, the It\^o formula for the square of the $H$-norm yields 
\begin{align*}
  &\frac12\|u_n(t)\|_H^2 + k_A\int_0^t\|u_n(s)\|_V^2\,\d s\\
  &=\frac12\|u_0^n\|_H^2 + \int_0^t\left(k_AF(u_n(s)) + \C_{\lambda_n}(u_n(s))-
  k_Af(u_n(s)), u_n(s)\right)_H\,\d s\\
  &\qquad+\frac12\int_0^t\|G_n(s)\|^2_{\cL^2(H,H)}\,\d s
  +\int_0^t\left(u_n(s), G_n\d W(s)\right)_H\,.
\end{align*}
By {\bf A2}--{\bf A3}, \eqref{bound_C_lam}, and the properties of
$(u_0^n,G_n)_n$ we deduce that 
\begin{align*}
  \frac12\|u_n(t)\|_H^2 + k_A\int_0^t\|u_n(s)\|_V^2\,\d s
  \leq 
  C\left(1+\int_0^t\|u_n(s)\|_H^{2}\,\d s\right)
  +\int_0^t\left(u_n(s), G_n\d W(s)\right)_H
\end{align*}
for some constant $C>0$ independent of $n$.
 A standard application of the 
the Burkholder--Davis--Gundy inequality 
together with the Gronwall lemma  implies that 
\[
  \|u_n\|_{L^2_\mP(\Omega; C^0([0,T]; H)\cap L^2(0,T; V))}\leq C\,.
\]
Moreover, we recall also that by \eqref{bound_C_lam} and {\bf A2} we have
\[
  \|\C_{\lambda_n}(u_n)\|_{L^\infty_\mP(\Omega\times(0,T); H)}
  +\|f(u_n)\|_{L^\infty_\mP(\Omega\times(0,T); H)}\leq C\,,
\]
while from the convergence $G_n\to G$ in $\cL^2(H,H)$ it follows that 
\[
  \int_0^\cdot G_n\,\d W(s)\to \int_0^\cdot G\,\d W(s)
  \quad\text{in } L^2_\mP(\Omega; C^0([0,T]; H))\,.
\]
Since $C^0([0,T]; H)\cap L^2(0,T; V)$ is compactly embedded in
$L^2(0,T; H)$  as effect of the Aubin--Lions Lemma, 
by identifying $W$ with a constant
sequence of random variables in $C^0([0,T];\tilde H)$, where
$\tilde H$ is a Hilbert--Schmidt extension of $H$,
by the Prokhorov and Skorokhod theorem there exists
a probability space $(\hat \Omega, \hat\cF, \hat\P)$ 
and a sequence $(\Gamma_n)_n$ such that $\Gamma_n:\hat\Omega\to\Omega$
satisfies $(\Gamma_n)_{\#}\hat\P=\P$ for every $n\in\enne$  and
\begin{align*}
  \hat u_n:=u_n\circ\Gamma_n\to \hat u \quad&\text{in } 
  L^2(0,T; H)\,, \quad\hat\P\text{-a.s.}\,,\\
  \hat u_n\wstarto \hat u \quad&\text{in } 
  L^2_w(\hat\Omega; L^\infty(0,T;H))\cap L^2(\hat\Omega; L^2(0,T; V))\,,\\
  \C_{\lambda_n}(\hat u_n)\wstarto \hat y
  \quad&\text{in } 
  L^\infty(\hat\Omega\times(0,T);H)\,,\\
  \hat W_n:=W\circ\Gamma_n \to \hat W \quad&\text{in } C^0([0,T];\tilde H)\,,
  \quad\hat\P\text{-a.s.}\,,\\
  \hat I_n:=\left(\int_0^\cdot G_n\,\d W(s)\right)\circ\Gamma_n \to \hat I
  \quad&\text{in } C^0([0,T];H)\,,
  \quad\hat\P\text{-a.s.}\,,
\end{align*}
for some processes 
\begin{align*}
  \hat u &\in L^2_w(\hat \Omega; L^\infty(0,T; H))\cap L^2(\hat\Omega; L^2(0,T; V))\,,\\
  \hat y  &\in L^\infty(\hat\Omega\times(0,T);H)\,,\\
  \hat W &\in L^2(\hat \Omega; C^0([0,T]; \tilde H))\,,\\
  \hat I &\in L^2(\hat \Omega; C^0([0,T]; H))\,.
\end{align*}
In particular, these convergences imply also that 
\[
  \L\hat u_n\wto\L\hat u \quad\text{in } L^2(\hat\Omega; L^2(0,T; V^*))\,,
\]
and, by the strong-weak closure of maximal monotone operator $f$ 
and by  the Lipschitz-continuity of $F$,
\begin{align*}
  f(\hat u_n)\wstarto f(\hat u) \quad&\text{in } L^\infty(\hat\Omega\times(0,T);H)\,,\\
  F(\hat u_n)\wstarto F(\hat u) \quad&\text{in } L^2(0,T;H)\,,\quad\hat\P\text{-a.s.}
\end{align*}
 Define  now the filtrations $(\hat\cF_{n,t})_t$ and
$(\hat\cF_{t})_t$  by 
\[
  \hat\cF_{n,t}:=\sigma\left\{\hat u_n(s), \hat I_n(s), \hat W_n(s): s\in[0,t] \right\}\,,
  \quad
  \hat\cF_{t}:=\sigma\left\{\hat u(s), \hat y(s), \hat I(s), \hat W(s): s\in[0,t] \right\}\,.
\]
It is a standard matter (see \cite{scar-NON21} for details) 
to check that $\hat W_n$ and $\hat W$
are $H$-cylindrical Wiener processes with respect to 
$(\hat\cF_{n,t})_t$ and $(\hat\cF_{t})_t$, respectively.
Moreover, by using classical martingale arguments, it also holds that 
$\hat I_n=\int_0^\cdot G_n\,\d \hat W_n(s)$ and 
$\hat I=\int_0^\cdot G\,\d \hat W(s)$.
We now note that 
\[
  \hat u_n(t)+k_A\int_0^tB(\hat u_n(s))\,\d s=
  u_0^n + \int_0^t\left[k_AF(\hat u_n(s))+  
  \C_{\lambda_n}(\hat u_n(s))\right]\,\d s + \int_0^tG_n\,\d \hat W_n(s) \quad\text{in } H\,,
\]
for every $t\geq0$, $\hat\P$-almost surely. By using the convergences above and 
classical arguments we infer then that 
$\hat u\in L^2(\hat\Omega; C^0([0,T]; H))$ with 
\[
  \hat u(t)+k_A\int_0^tB(\hat u(s))\,\d s=
  u_0 + \int_0^t\left[k_AF(\hat u(s))+  
  \hat y(s)\right]\,\d s + \int_0^tG\,\d \hat W(s) \quad\text{in } H\,,
\]
for every $t\geq0$, $\hat\P$-almost surely. This concludes the proof of 
Theorem~\ref{thm1}.


\section{The Kolmogorov equation}
\label{sec:kolm}
 Let us focus now  on the Kolmogorov equation associated to the stochastic evolution 
equation \eqref{eq1}--\eqref{eq3}, as this will be crucial in proving uniqueness.

The equivalent reformulation  \eqref{equiv} of the doubly nonlinear problem 
\eqref{eq1}--\eqref{eq3} suggests the form of the associated Kolmogorov equation, namely 
\begin{align}
  \nonumber
  &\alpha\varphi(x) - \frac12\operatorname{Tr}\left[QD^2\varphi(x)\right]
  +k_A\left(\L x, D\varphi(x)\right)_H \\
  \label{eq:kolm}
  &\qquad= g(x) +\left(k_AF(x) + \C(x) - k_Af(x), D\varphi(x)\right)_H\,,
  \quad x\in V_2\,,
\end{align}
where $\alpha>0$ is fixed and $g$ is a suitable forcing term.
Clearly, due  to the singularity of the perturbation $\C$, in general 
one cannot expect to prove existence of mild solutions to \eqref{eq:kolm}
via classical arguments, i.e., by  directly exploiting
the  possible regularising properties of the Ornstein--Uhlenbeck
semigroup. The idea is then to characterise the solution of \eqref{eq:kolm}
is a different way, still relying on approximating arguments \`a la Friedrichs.

From now on, for brevity of notation we will assume with no loss
of generality that 
\[
k_A=1\,.
\]
Furthermore, we recall that by assumptions {\bf A2}--{\bf A3} one has
\beq
  \label{bound_F}
  \norm{F(x)}_H \leq C_F\,, \quad \norm{f(x)}_H\leq C_f \qquad\forall\,x\in H\,.
\eeq
This will be essential in the analysis of the Kolmogorov equation.

\subsection{The Ornstein--Uhlenbeck semigroup}
\label{ssec:OU}
We collect here some preliminary results on the 
Ornstein--Uhlenbeck semigroup associated to \eqref{eq:kolm}.
First, note that by assumption {\bf A2} we have that $-\L$ generates 
a strongly continuous semigroup $(e^{-t\L})_{t\geq0}$ of contractions on $H$.
Introducing then the operator 
\beq
\label{Qt}
  Q_t:=\int_0^te^{-2s\L}Q\,\d s\,, \quad t\geq0\,,
\eeq
with $ Q =G^*G$, 
it is clear that $Q_t\in \cL^1(H,H)$ for every $t\geq0$ since $G\in\cL^2(H,H)$
by assumption {\bf A4}.
With this notation, we define the Ornstein--Uhlenbeck semigroup as
\beq
  \label{Rt}
  (R_t\varphi)(x):=\int_H\varphi(e^{-t\L}x + y)N_{Q_t}(\d y)\,, \quad x\in H\,, \quad \varphi\in\cB_b(H)\,,
  \quad t\geq0\,,
\eeq
where $N_{Q_t}$ denotes a Gaussian measure on $H$, centred at $0$,
and with covariance operator $Q_t$. 
Note that an easy computation shows that 
\[
  Q_t=\frac12\L^{-1}\left(I-e^{-2t\L}\right)Q \quad\forall\,t\geq0\,.
\]

\begin{lem}[Strong Feller property]
  \label{lem:OU}
  Assume {\bf A2},  {\bf A4}, and {\bf A6}. Then, the semigroup $R$ is strong Feller. 
  More specifically, 
  there exists a constant $C_{R}>0$ such that,
  for every $t>0$ and $\varphi\in\cB_b(H)$, it holds that 
  $R_t\varphi\in C^\infty_b(H)$ and
  \begin{align}
    \label{SF1}
    \sup_{x\in H}|(R_t\varphi)(x)| &\leq \sup_{x\in H}|\varphi(x)|\,,\\
    \label{SF2}
    \sup_{x\in H}\|D(R_t\varphi)(x)\|_H &\leq 
    \frac{C_{R}}{t^{\frac12+\delta}}\sup_{x\in H}|\varphi(x)|\,.
  \end{align}
    Moreover, for every $\varphi \in C^1_b(H)$, it holds that 
  \begin{align}
    \label{SF2_bis}
    \sup_{x\in H}\|D^2(R_t\varphi)(x)\|_{\cL(H,H)} &\leq 
    \frac{C_{R}}{t^{\frac12+\delta}}\sup_{x\in H}\|D\varphi(x)\|_H\,.
  \end{align} 
  Furthermore, for every 
  $\eps\in(0,\frac12-\delta)$
 and 
  $\eta\in (0,\frac{1-2\delta}{1+2\delta})$,
   there exists constants
  $C_{R,\eps},C_{R,\eta}>0$ such that, 
  for every $t>0$ and $\varphi\in\cB_b(H)$, it holds that 
  $D(R_t\varphi)(H)\subset V_{2\eps}$ and
  \begin{align}
  \label{SF3}
  \sup_{x\in H}\|\L^\eps D(R_t\varphi)(x)\|_H &\leq 
    \frac{C_{R,\eps}}{t^{\frac12+\delta+\eps}}\sup_{x\in H}|\varphi(x)|\,,\\
  \label{SF4}
  \norm{R_t\varphi}_{C^{1,\eta}(H)} &\leq 
  \frac{C_{R,\eta}}{t^{(1+\eta)(\frac12+\delta)}}\sup_{x\in H}|\varphi(x)|
  \end{align}
\end{lem}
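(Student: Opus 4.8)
The plan is to derive all the stated bounds from the explicit Gaussian structure of the Ornstein--Uhlenbeck semigroup $R_t$ together with the Cameron--Martin formula, using assumption {\bf A6} to control the unbounded operators that arise. The key quantitative input is the following: since $G$ commutes with $\L$ (by {\bf A4}) and $G(H)=V_{2\delta}$ (by {\bf A6}), one expects $\L^{1/2+\delta}$ to be controlled by $G^{-1}$ in a precise sense, and the operator $\L^{1/2}Q_t^{-1/2}$ (which governs the differentiability of $R_t$ via Cameron--Martin) should blow up at the sharp rate $t^{-(1/2+\delta)}$. The first step is therefore to establish, by the spectral theorem applied to $\L$ and $G$ simultaneously, the operator bound
\begin{equation}
\label{proof:key}
\norm{Q_t^{-1/2}e^{-t\L}}_{\cL(H,H)}\leq \frac{C}{t^{\frac12+\delta}}\,,\qquad t>0\,,
\end{equation}
and more generally $\norm{\L^\eps Q_t^{-1/2}e^{-t\L}}_{\cL(H,H)}\leq C t^{-(1/2+\delta+\eps)}$ for $\eps$ in the stated range. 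The computation reduces to estimating, on each spectral component with eigenvalue $\mu$ of $\L$ and $\gamma$ of $G^*G$, the quantity $\mu^\eps e^{-t\mu}\big/\big(\tfrac{\gamma}{2\mu}(1-e^{-2t\mu})\big)^{1/2}$, and the constraint $\gamma\sim\mu^{-2\delta}$ (encoding $G(H)=V_{2\delta}$) combined with $\sup_{r>0} r^{1/2+\delta+\eps}e^{-r}(1-e^{-2r})^{-1/2}<\infty$ yields the claimed rate.

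Granting \eqref{proof:key}, the bounds \eqref{SF1}--\eqref{SF4} follow from standard formulas for Gaussian integration. Estimate \eqref{SF1} is immediate since $N_{Q_t}$ is a probability measure. For \eqref{SF2} I would differentiate under the integral in \eqref{Rt} using the Cameron--Martin/Bismut--Elworthy--Li identity
\begin{equation}
\label{proof:BEL}
(D(R_t\varphi)(x),h)_H=\int_H \varphi(e^{-t\L}x+y)\,\big(Q_t^{-1/2}e^{-t\L}h,Q_t^{-1/2}y\big)_H\,N_{Q_t}(\d y)\,,
\end{equation}
so that $\norm{D(R_t\varphi)(x)}_H\leq \norm{Q_t^{-1/2}e^{-t\L}}_{\cL(H,H)}\sup|\varphi|$ after applying Cauchy--Schwarz and using $\int_H\norm{Q_t^{-1/2}y}_H^2\,N_{Q_t}(\d y)=\dim$-type cancellation (more precisely the $L^2$-norm of a standard Gaussian component). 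Plugging in \eqref{proof:key} gives \eqref{SF2}. The derivative-in-$V_{2\eps}$ bound \eqref{SF3} is obtained identically, replacing $Q_t^{-1/2}e^{-t\L}h$ by $\L^\eps Q_t^{-1/2}e^{-t\L}h$ in \eqref{proof:BEL} and invoking the more general spectral estimate. For the second-derivative bound \eqref{SF2_bis} I would use the semigroup law $R_t=R_{t/2}R_{t/2}$ together with $D^2(R_t\varphi)=D^2 R_{t/2}(R_{t/2}\varphi)$, applying the Cameron--Martin identity once more to the already-$C^1_b$ function $R_{t/2}\varphi$ and controlling one derivative by $Q_{t/2}^{-1/2}e^{-(t/2)\L}$; this transfers the $t^{-(1/2+\delta)}$ rate to the Hessian acting on a $C^1_b$ datum.

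The interpolation estimate \eqref{SF4} is where the main subtlety lies, and I expect it to be the hard part. The $C^{1,\eta}$-seminorm of $R_t\varphi$ should be obtained by interpolating between the gradient bound \eqref{SF2} and the $V_{2\eps}$-regularity \eqref{SF3} of the gradient: writing $\norm{D(R_t\varphi)(x)-D(R_t\varphi)(x')}_H$ and splitting into a \emph{low-frequency} part controlled by $\L^\eps$-regularity (hence H\"older in $x$ with exponent tied to $\eps$) and a \emph{high-frequency} part controlled by the plain gradient bound, one optimizes the split to produce the exponent $\eta$. The delicate point is matching exponents: the requirement $\eta\in(0,\frac{1-2\delta}{1+2\delta})$ and the range $\eps\in(0,\tfrac12-\delta)$ must be reconciled so that the interpolation closes with the stated singular rate $t^{-(1+\eta)(1/2+\delta)}$, and one must verify that the admissible $\eps$ indeed covers the H\"older exponent $\eta$ needed. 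Tracking these exponents carefully -- rather than any single estimate -- is the crux of the argument; the rest is routine Gaussian analysis built on \eqref{proof:key}.
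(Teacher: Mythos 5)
Your proposal reconstructs a genuine proof, which is worth noting because the paper itself gives none: its entire ``proof'' of Lemma~\ref{lem:OU} is a citation to \cite[Lem.~4.1--4.2]{BOS_uniq}. Measured against the standard Gaussian-analysis argument that such a citation hides, most of what you write is correct. The key operator estimate $\|Q_t^{-1/2}e^{-t\L}\|_{\cL(H,H)}\leq C t^{-(\frac12+\delta)}$ (and its $\L^\eps$-variant) is exactly the right pivot, and your spectral derivation of it from the simultaneous diagonalization permitted by {\bf A4} and the equivalence $\gamma\sim\mu^{-2\delta}$ encoded in $G(H)=V_{2\delta}$ is sound. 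Likewise \eqref{SF1}, the Bismut--Elworthy--Li/Cameron--Martin argument for \eqref{SF2} and \eqref{SF3}, and the semigroup-splitting argument for \eqref{SF2_bis} are all standard and correctly sketched. One small imprecision: $\int_H\|Q_t^{-1/2}y\|_H^2\,N_{Q_t}(\d y)$ is infinite in infinite dimensions; what the Cauchy--Schwarz step actually uses is that $y\mapsto(Q_t^{-1/2}y,k)_H$ extends to a real Gaussian variable with variance $\|k\|_H^2$ under $N_{Q_t}$ -- which your parenthetical remark essentially acknowledges.

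The genuine gap is your proposed proof of \eqref{SF4}. You suggest interpolating between \eqref{SF2} and \eqref{SF3}, reading \eqref{SF3} as a ``low-frequency'' control giving H\"older continuity in $x$. This conflates two unrelated notions of regularity: \eqref{SF3} concerns smoothness of the \emph{value} $D(R_t\varphi)(x)$ in the scale of spaces $V_{2\eps}$, uniformly in $x$, whereas \eqref{SF4} requires a modulus of continuity of the \emph{map} $x\mapsto D(R_t\varphi)(x)$. A map can be bounded in $C^0_b(H;V_{2\eps})$ and nowhere continuous as a function of $x$, so no interpolation of \eqref{SF2} with \eqref{SF3} can produce \eqref{SF4}. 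The correct route uses ingredients you already have: first, for merely bounded $\varphi$, the semigroup law and \eqref{SF2}--\eqref{SF2_bis} give the Hessian bound
\begin{equation*}
\|D^2(R_t\varphi)\|_{C^0_b(H;\cL(H,H))}=\|D^2 R_{t/2}(R_{t/2}\varphi)\|_{C^0_b(H;\cL(H,H))}
\leq \frac{C}{t^{\frac12+\delta}}\,\|D(R_{t/2}\varphi)\|_{C^0_b(H;H)}
\leq \frac{C'}{t^{1+2\delta}}\sup_{x\in H}|\varphi(x)|\,;
\end{equation*}
then the elementary interpolation $\|Dv(x)-Dv(x')\|_H\leq (2\|Dv\|_\infty)^{1-\eta}\|D^2v\|_\infty^{\eta}\|x-x'\|_H^{\eta}$ applied to $v=R_t\varphi$ yields the $\eta$-H\"older seminorm of $D(R_t\varphi)$ with rate $t^{-[(1-\eta)(\frac12+\delta)+\eta(1+2\delta)]}=t^{-(1+\eta)(\frac12+\delta)}$, exactly as stated. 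Note also that the restriction $\eta<\frac{1-2\delta}{1+2\delta}$ has nothing to do with reconciling $\eta$ with the range of $\eps$, as you suggest; it is precisely the condition $(1+\eta)(\frac12+\delta)<1$ needed for the singularity to be integrable against $e^{-\alpha t}$ in the resolvent formula used downstream (Proposition~\ref{prop:kolm}). With this replacement the proof closes; the fix is local, since \eqref{SF3} is still needed elsewhere (for the compactness in Proposition~\ref{prop:lim_kolm}), just not for \eqref{SF4}.
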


The proof of Lemma~\ref{lem:OU} follows immediately from
\cite[Lem.~4.1--4.2]{BOS_uniq}.

\begin{remark}\rm 
Note that since by assumption {\bf A6} one has $\delta\in(0,\frac12)$, it follows that 
the exponent $\frac12+\delta$ also belongs to $(0,1)$. Moreover,
the conditions on $\eps\in(0,\frac12-\delta)$ and $\eta\in(0,\frac{1-2\delta}{1+2\delta})$ 
ensure that also $\frac12+\delta+\eps\in(0,1)$
and $(1+\eta)(\frac12+\delta)\in (0,1)$, as one can easily check.
\end{remark}

\subsection{The regularised Kolmogorov equation}
The main  technical challenge in the study of the Kolmogorov
equation \eqref{eq:kolm} is that of taming the singularity of $\C$. We
proceed by approximation and, for all $\lambda>0$, argue with  $\C_\lambda\in C^{0,s_A}_b(H;H)$, 
which satisfies the uniform bound \eqref{bound_C_lam}, and  with
 the Yosida approximation 
$f_\lambda:H\to H$ of $f$.

Given a fixed $\alpha>0$ and a forcing $g\in C^0_b(H)$
 (some additional qualification for $\alpha$ and $g$ will be
introduced later),
the regularised Kolmogorov equation reads, for $\lambda>0$,
\begin{align}
  \nonumber
  &\alpha\varphi_{\lambda}(x)
  - \frac12\operatorname{Tr}\left[QD^2\varphi_{\lambda}(x)\right]
  +\left(\L x, D\varphi_{\lambda}(x)\right)_H \\
  \label{eq:kolm_reg}
  &\qquad= g(x) +\left(F(x) + \C_{\lambda}(x) - f_\lambda(x), D\varphi_{\lambda}(x)\right)_H\,,
  \quad x\in V_2\,.
\end{align}
Solutions to the regularised Kolmogorov equation \eqref{eq:kolm_reg} can be
intended either in the mild sense, by exploiting the representation formula for the resolvent of the semigroup $R$,
or in the strong classical sense,
as specified in the following.
\begin{defin}[Mild solution]
  \label{def:mild_kolm}
  A function $\psi\in C^1_b(H)$  is said to be a
  \emph{mild solution} to \eqref{eq:kolm_reg} if 
  \[
  \psi(x)=\int_0^{+\infty}e^{-\alpha t}R_t
  \left[g+\left(F+ \C_{\lambda} - f_\lambda, 
  D\psi\right)_H\right](x)\,\d t \qquad\forall\,x\in H\,.
  \]
\end{defin}
\begin{defin}[Classical solution]
  \label{def:class_kolm}
   A function $\psi\in C^2_b(H)$ is said to be a \emph{classical
    solution} to \eqref{eq:kolm_reg} if it is a mild solution and it satisfies,
  for every $x\in V_2$,
  \begin{align*}
  &\alpha\psi(x)
  - \frac12\operatorname{Tr}\left[QD^2\psi(x)\right]
  +\left(\L x, D\psi(x)\right)_H 
  = g(x) +\left(F(x) + \C_{\lambda}(x) - f_\lambda(x), D\psi(x)\right)_H\,.
\end{align*}
\end{defin}
The following result ensures that the regularised Kolmogorov equation
\eqref{eq:kolm_reg} is well posed 
in both the mild and strong sense, and provides estimates on the solution $\varphi_{\lambda}$
 which are uniform  with respect to the parameter $\lambda$.
\begin{prop}[Well-posedness of the Kolmogorov equation]
  \label{prop:kolm}
  Assume {\bf A1--A6}, let $g\in C^{0,s_A}_b(H)$, 
  $\eps\in(0,\frac12-\delta)$, and $\eta\in(0,\frac{1-2\delta}{1+2\delta})$.
  Then, there exists $\alpha_0>0$ such that, for every
  $\alpha>\alpha_0$, the following holds:
  \begin{itemize}
  \item[(i)] for every $\lambda>0$,
  equation \eqref{eq:kolm_reg} admits a unique mild solution 
  $\varphi_{\lambda}\in C^1_b(H)$;
  \item[(ii)] there exists a constant $ C_1>0$, independent of $\lambda$, such that 
  \beq
    \label{est1}
    \norm{\varphi_{\lambda}}_{C^1_b(H)} \leq  C_1  \qquad\forall\,\lambda>0\,;
  \eeq 
  \item[(iii)]  there exists a constant $C_\eps>0$, 
  independent of $\lambda$, such that $D\varphi_{\lambda}(H)\subset V_{2\eps}$ and
  \beq
    \label{est2}
    \norm{\L^\eps D\varphi_{\lambda}}_{C^0_b(H;H)} \leq C_\eps \qquad\forall\,\lambda>0\,;
  \eeq 
  \item[(iv)] there exists a constant $C_\eta>0$, 
  independent of $\lambda$, such that
  \beq
    \label{est3}
    \norm{\varphi_{\lambda}}_{C^{1,\eta}_b(H)} \leq C_\eta \qquad\forall\,\lambda>0\,;
  \eeq 
  \item[(v)] for every $\lambda>0$,
  the unique mild solution satisfies $\varphi_{\lambda}\in C^2_b(H)$ and
  is also a classical solution of equation \eqref{eq:kolm_reg};
  \item[(vi)] there exists a constant $ C_2>0$, independent of $\lambda$, such that
  \beq
    \label{est4}
    \norm{\varphi_{\lambda}}_{C^{2}_b(H)} \leq  C_2
    \qquad\forall\,\lambda>0\,.
  \eeq 
  \end{itemize}
\end{prop}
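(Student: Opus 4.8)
The plan is to solve the regularised equation \eqref{eq:kolm_reg} by a contraction argument in $C^1_b(H)$ built on the resolvent of the Ornstein--Uhlenbeck semigroup $R$, and then to bootstrap the fixed point using the smoothing estimates of Lemma~\ref{lem:OU}, taking care that \emph{every} constant produced be controlled only through the uniform sup-bounds $\norm{F}_\infty\le C_F$, $\norm{f_\lambda}_\infty\le C_f$ and $\norm{\C_\lambda}_\infty\le C_A'$ (see \eqref{bound_F}, \eqref{bound_C_lam}), and never through the $\lambda$-dependent Lipschitz/H\"older constants of $\C_\lambda$ and $f_\lambda$. To this end I abbreviate $b_\lambda:=F+\C_\lambda-f_\lambda$, so that $\norm{b_\lambda}_{C^0_b(H;H)}\le M:=C_F+C_f+C_A'$ uniformly in $\lambda$, and introduce the map $\mathcal T_\lambda\psi(x):=\int_0^{+\infty}e^{-\alpha t}R_t\big[g+(b_\lambda,D\psi)_H\big](x)\,\d t$ on $C^1_b(H)$. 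For part (i), given $\psi\in C^1_b(H)$ the forcing $g+(b_\lambda,D\psi)_H$ lies in $C^0_b(H)$, so by the strong Feller property $R_t[\cdot]\in C^\infty_b(H)$; estimates \eqref{SF1}--\eqref{SF2} and the integrability of $t\mapsto e^{-\alpha t}t^{-(1/2+\delta)}$ on $(0,+\infty)$ (recall $\tfrac12+\delta\in(0,1)$) show that $\mathcal T_\lambda\psi\in C^1_b(H)$, with derivative obtained by differentiating under the integral sign. Subtracting two such expressions, only $D\psi_1-D\psi_2$ survives, and \eqref{SF1}--\eqref{SF2} give $\norm{\mathcal T_\lambda\psi_1-\mathcal T_\lambda\psi_2}_{C^1_b(H)}\le\big(\tfrac{M}{\alpha}+C_R M\,\Gamma(\tfrac12-\delta)\alpha^{-(1/2-\delta)}\big)\norm{\psi_1-\psi_2}_{C^1_b(H)}$. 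Both coefficients tend to $0$ as $\alpha\to+\infty$ and depend only on $M,C_R,\delta$, so there is $\alpha_0$, independent of $\lambda$, such that $\mathcal T_\lambda$ is a contraction for $\alpha>\alpha_0$; the Banach fixed point theorem then yields the unique mild solution $\varphi_\lambda\in C^1_b(H)$.

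For the uniform estimates (ii)--(iv) I write the fixed-point identity for $\varphi_\lambda$ itself and insert $\norm{g+(b_\lambda,D\varphi_\lambda)_H}_\infty\le\norm{g}_\infty+M\norm{D\varphi_\lambda}_\infty$. Estimate \eqref{SF2} gives $\norm{D\varphi_\lambda}_\infty\le C_R\Gamma(\tfrac12-\delta)\alpha^{-(1/2-\delta)}\big(\norm{g}_\infty+M\norm{D\varphi_\lambda}_\infty\big)$; possibly enlarging $\alpha_0$ so that the $M$-term has coefficient $<\tfrac12$, I absorb it and obtain a uniform bound on $\norm{D\varphi_\lambda}_\infty$, hence on $\norm{\varphi_\lambda}_\infty$ through \eqref{SF1}, which is (ii). Applying \eqref{SF3} and \eqref{SF4} to the same, now uniformly bounded, forcing and using that $\tfrac12+\delta+\eps<1$ and $(1+\eta)(\tfrac12+\delta)<1$ make the corresponding time integrals finite, I obtain at once the uniform bounds (iii) on $\norm{\L^\eps D\varphi_\lambda}_{C^0_b(H;H)}$ and (iv) on $\norm{\varphi_\lambda}_{C^{1,\eta}_b(H)}$.

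For part (v) I fix $\lambda>0$. By (iv) one has $D\varphi_\lambda\in C^{0,\eta}_b(H;H)$, and since $g\in C^{0,s_A}_b(H)$, $F$ is Lipschitz, $f_\lambda$ is Lipschitz and $\C_\lambda\in C^{0,s_A}_b(H;H)$, the forcing $h_\lambda:=g+(b_\lambda,D\varphi_\lambda)_H$ belongs to $C^{0,\gamma}_b(H)$ for $\gamma:=\min\{s_A,\eta\}$, with a finite (though not uniform) H\"older seminorm. The plan is to upgrade the mild solution to a classical one by a Schauder-type second-order estimate for the OU resolvent: splitting $R_t=R_{t/2}R_{t/2}$, applying \eqref{SF2_bis} to the $C^1_b$ function $R_{t/2}h_\lambda$, and bounding $\norm{DR_{t/2}h_\lambda}_\infty$ by a H\"older-oscillation argument, one reaches $\norm{D^2R_t h_\lambda}_{\cL(H,H)}\lesssim t^{-(1/2+\delta)(2-\gamma)}[h_\lambda]_{C^{0,\gamma}_b(H)}$. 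Assumption {\bf A6} guarantees $(\tfrac12+\delta)(2-s_A)<1$, so the $t$-integral converges and $\varphi_\lambda\in C^2_b(H)$; that $\varphi_\lambda$ then satisfies \eqref{eq:kolm_reg} pointwise on $V_2$ in the sense of Definition~\ref{def:class_kolm} follows by applying the OU generator to the mild formula, i.e.\ differentiating $t\mapsto R_t\varphi_\lambda$ at $t=0$ and using the characterisation of $\varphi_\lambda$ as a resolvent fixed point.

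Part (vi), the uniform $C^2_b$ bound, is the main obstacle, precisely because the seminorm $[h_\lambda]_{C^{0,\gamma}_b(H)}$ used in (v) blows up as $\lambda\to0$: the modulus of continuity of $\C_\lambda=(A^{-1}-k_AI)(F-B_\lambda)$ degenerates, since $B_\lambda$ is only $\lambda^{-1}$-Lipschitz. The plan is therefore to estimate $\sup_x\norm{D^2\varphi_\lambda(x)}_{\cL(H,H)}$, represented through $\int_0^{+\infty}e^{-\alpha t}D^2R_t h_\lambda\,\d t$, \emph{without} passing through $[h_\lambda]_{C^{0,\gamma}_b(H)}$. One splits the contribution of $b_\lambda$ into the uniform fallback $\norm{b_\lambda}_\infty\le M$—available precisely because $A^{-1}-k_AI$ has bounded range, see \eqref{bound_C_lam}—used on a small time window where the degenerating oscillation of $\C_\lambda$ is cut off, and into the uniform H\"older regularity carried by $g$ (exponent $s_A$) and by $D\varphi_\lambda$ (exponent $\eta$, from (iv)) on the complementary window. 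The balance of these two regimes against the smoothing rates \eqref{SF2_bis}--\eqref{SF3} is tuned exactly by {\bf A6} (through $(\tfrac12+\delta)(2-s_A)<1$), so that the resulting $t$-integral is finite with a constant depending only on $M$, $C_R$, $s_A$, $\delta$ and the norms of $g$. I expect this step—confining the degenerating modulus of $\C_\lambda$ to vanishingly small time scales while controlling everything else by the uniform estimates (ii)--(iv)—to be the delicate heart of the proof.
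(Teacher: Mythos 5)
Parts (i)--(iv) of your proposal are correct and essentially coincide with the paper's own proof: the same resolvent fixed-point map, the same contraction constant $\frac{M}{\alpha}+C_RM\,\Gamma(\tfrac12-\delta)\,\alpha^{\delta-\frac12}$ depending on $\lambda$ only through $M=C_F+C_f+C_A'$, and the same absorption scheme based on \eqref{SF1}--\eqref{SF4}. The problems are in (v) and (vi).

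In (v) there is a genuine gap. Your one-shot Schauder-type estimate produces the singularity $t^{-(\frac12+\delta)(2-\gamma)}$ with $\gamma=\min\{s_A,\eta\}$, and the time integral converges at $t=0$ if and only if $(\tfrac12+\delta)(2-\gamma)<1$, i.e.\ $\gamma>\frac{4\delta}{1+2\delta}$. Assumption {\bf A6} guarantees this for $s_A$, \emph{not} for $\gamma$: your sentence ``{\bf A6} guarantees $(\tfrac12+\delta)(2-s_A)<1$'' silently replaces $\gamma$ by $s_A$. This cannot be repaired by choosing $\eta$ more cleverly: every admissible $\eta$ lies below $\frac{1-2\delta}{1+2\delta}$, and $\frac{1-2\delta}{1+2\delta}\leq\frac{4\delta}{1+2\delta}$ as soon as $\delta\geq\frac16$ (a regime allowed by {\bf A6}, e.g.\ $\delta=\frac15$, $s_A=\frac45$), so for such $\delta$ the integral diverges for \emph{every} admissible $\eta$. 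This is precisely why the paper does not stop after one application of the Schauder estimate \cite[Prop.~6.4.2]{DapZab2} but iterates it: from a forcing of H\"older exponent $\theta$ one gets $\varphi_\lambda\in C_b^{\theta+\frac{2}{1+2\delta}}(H)$, hence $D\varphi_\lambda\in C_b^{0,\theta+\frac{1-2\delta}{1+2\delta}}(H;H)$, so the exponent of the forcing improves by the fixed amount $\frac{1-2\delta}{1+2\delta}$ at each step until it saturates at $s_A$ (it is capped by $g$ and $\C_\lambda$); only then does {\bf A6}, in the form $s_A+\frac{2}{1+2\delta}>2$, yield $\varphi_\lambda\in C^2_b(H)$. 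This bootstrap is the missing idea in your (v).

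Part (vi) in your proposal is a plan rather than a proof, and the plan fails as described. Your starting observation is reasonable---the $s_A$-H\"older seminorm of $\C_\lambda=(A^{-1}-k_AI)\circ(F-B_\lambda)$ is in general of order $\lambda^{-s_A}$---but the two-window splitting is backwards. Near $t=0$ the only sup-norm smoothing available is $\norm{D^2R_t\psi}_{\cL(H,H)}\lesssim t^{-(1+2\delta)}\norm{\psi}_\infty$ (compose \eqref{SF2} with \eqref{SF2_bis} via $R_t=R_{t/2}R_{t/2}$), and $\int_0^\tau t^{-(1+2\delta)}\,\d t=+\infty$ for every $\tau>0$ since $1+2\delta>1$: the small-time window is exactly where H\"older regularity of the forcing is indispensable and cannot be traded for the fallback $\norm{b_\lambda}_\infty\leq M$; away from $t=0$, boundedness already suffices, so the split buys nothing. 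Quantitatively, even optimising the crossover time between the two bounds $\min\bigl\{[h_\lambda]_{C^{0,\gamma}_b}\,t^{-(\frac12+\delta)(2-\gamma)},\,M\,t^{-(1+2\delta)}\bigr\}$, the resulting estimate is of order $[h_\lambda]_{C^{0,\gamma}_b}^{2\delta/((\frac12+\delta)\gamma)}$, which still blows up as $\lambda\to0$. For comparison, the paper's own proof of (vi) is a single further application of \cite[Prop.~6.4.2]{DapZab2} resting on the assertion that $x\mapsto g(x)+(F(x)+\C_\lambda(x)-f_\lambda(x),D\varphi_\lambda(x))_H$ is bounded in $C^{0,s_A}_b(H)$ \emph{uniformly} in $\lambda$, which gives $\norm{\varphi_\lambda}_{C^2_b(H)}\leq C\alpha^{2\delta-s_A(\frac12+\delta)}$; your scepticism targets exactly that uniformity, but your substitute argument does not close the step, so in your proposal (vi) remains unproven.
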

\begin{proof}[Proof of Proposition~\ref{prop:kolm}]
(i). 
We exploit a fixed point argument. Let 
  $ S_\alpha: C^1_b(H)\to C^0_b(H)$ be defined as
  \[
    S_\alpha v(x):=\int_0^{\infty}e^{-\alpha t} 
  R_t[g+(F+\C_\lambda-f_\lambda,Dv)_H](x)\,\d t\,,
  \quad v\in C^1_b(H)\,.
  \]
  Note that
  \begin{align*}
  &e^{-\alpha t} \|DR_t[g+(F+\C_\lambda-f_\lambda,Dv)_H](x)\|_{H}\\
  &\hspace{5.3mm}\stackrel{\eqref{SF2}}{\leq} 
  C_R\frac{e^{-\alpha t}}{t^{\frac12+\delta}}
  \sup_{y\in H}|g(y)+(F(y)+\C_\lambda(y)-f_\lambda(y),Dv(y))_H|\\
  &\qquad\leq C_R\left(\|g\|_{C^0_b(H)} + 
  (C_F+C_A'+C_f)\norm{Dv}_{C^0_b(H;H)}\right)
  \frac{e^{-\alpha t}}{t^{\frac12+\delta}}\,.
  \end{align*}
  Hence, thanks to the 
  Dominated Convergence Theorem and  to a  differentiation
  under  the   integral sign, 
  we get that $ S_\alpha$ 
  is well-defined as a map from $C^1_b(H)$ into itself, and it holds that
  \[
  D( S_\alpha v)(x)=
  \int_0^{\infty}e^{-\alpha t} DR_t[g+(F+\C_\lambda-f_\lambda,Dv)_H](x)\,\d t\,,
  \quad v\in C^1_b(H)\,.
  \]
  Furthermore, 
  for every $v_1,v_2\in C^1_b(H)$, by arguing on the difference of the 
  respective equations it is immediate to see that 
  \[
  | S_\alpha v_1(x) -  S_\alpha v_2(x)|
  \leq \frac1\alpha(C_F+C_A'+C_f)
  \norm{Dv_1-Dv_2}_{C^0_b(H;H)}
  \]
  and 
  \begin{align*}
  \|D  S_\alpha v_1(x) - D  S_\alpha v_2(x)\|_{H}&\leq
  C_R(C_F+C_A'+C_f)\norm{Dv_1-Dv_2}_{C^0_b(H;H)}
  \int_0^{+\infty}e^{-\alpha t} \frac1{t^{\frac12+\delta}}\,\d t\\
  &=\frac{C_R}{\alpha^{\frac12-\delta}}\Gamma
  \left(\frac12-\delta\right)
  (C_F+C_A'+C_f)\norm{Dv_1-Dv_2}_{C^0_b(H;H)}
  \end{align*}
   where $\Gamma$ is the Riemann $\Gamma$-function. 
  It follows that $ S_\alpha$ is a contraction on $C^1_b(H)$
  provided that $\alpha$ is chosen sufficiently large, e.g., for 
  $\alpha>\alpha_0$ for some $\alpha_0$ only depending on 
  $C_F, C_A', C_f, \delta, C_R$. Existence and uniqueness of a mild 
  solution is then a consequence of the Banach Fixed Point Theorem.\\
  (ii). This is an immediate consequence of the computations in point (i).
  Indeed, since for $\alpha>\alpha_0$ one has in particular that
  $\frac{C_R}{\alpha^{\frac12-\delta}}\Gamma
  \left(\frac12-\delta\right)
  (C_F+C_A'+C_f)<1$  and  the already performed computations yield
  \[
  \left(1- \frac{C_R}{\alpha^{\frac12-\delta}}\Gamma
  \left(\frac12-\delta\right)
  (C_F+C_A'+C_f)\right)\|D\varphi_\lambda\|_{C^0_b(H;H)} \leq 
  \frac{C_R}{\alpha^{\frac12-\delta}}\Gamma
  \left(\frac12-\delta\right)\|g\|_{C^0_b(H)}\,,
  \]
  hence \eqref{est1} follows.\\
  (iii). 
  By \eqref{SF3}
  and the Dominated Convergence Theorem,
  we have that 
  \begin{align*}
  \norm{\L^\eps D\varphi_\lambda}_{C^0_b(H;H)}&\stackrel{\eqref{SF3}}{\leq}
  \int_0^\infty e^{-\alpha t}\frac{C_{R,\eps}}{t^{\frac12+\delta+\eps}}
  \sup_{x\in H}\left(|g(x)| + 
  (C_F+C_A'+C_f)
  \norm{D\varphi_\lambda(x)}_{H}\right)\,\d t\\
  &\hspace{1.2mm} \leq \frac{C_{R,\eps}}{\alpha^{\frac12-\delta-\eps}}
  \Gamma\left(\frac12-\delta-\eps\right)
  \left(\norm{g}_{C^0_b(H)} + 
  (C_F+C_A'+C_f)
  \norm{D\varphi_\lambda}_{C^0_b(H;H)}\right)\,.
  \end{align*}
  Hence,  by possibly choosing a larger 
  $\alpha_0$ such that 
  for all $\alpha> \alpha_0$ it also holds that 
  \[
  \frac{C_{R,\eps}}{\alpha^{\frac12-\delta-\eps}}
  \Gamma\left(\frac12-\delta-\eps\right)
  (C_F+C_A'+C_f)<1\,,
  \]
  estimate \eqref{est2} follows. \\
  (iv).
  For every $\eta\in(0, (1-2\delta)/(1+2\delta))$, by \eqref{SF4} one has,
  for all $t>0$,
  \begin{align*}
  &e^{-\alpha t}\norm{DR_t[g+(F+\C_\lambda-f_\lambda,D\varphi_\lambda)_H]}_{C^{0,\eta}_b(H)}\\
  &\qquad\stackrel{\eqref{SF4}}{\leq} \frac{C_{R,\eta}}{t^{(1+\eta)(\frac12+\delta)}}e^{-\alpha t}
  \left(\norm{g}_{C^0_b(H)} + 
  (C_F+C_A'+C_f)
  \norm{D\varphi_\lambda}_{C^0_b(H;H)}\right) \,,
  \end{align*}
  where $(1+\eta)(\frac12+\delta)\in (0,1)$. 
  Using the Dominated Convergence Theorem and estimate \eqref{est1}
   one has  that $\varphi_\lambda\in C^{1,\eta}_b(H)$ and 
  \begin{align*}
    \norm{\varphi_\lambda}_{C^{1,\eta}_b(H)}\leq 
    C_{R,\eta}
    \left(\norm{g}_{C^0_b(H)} + 
     C_1(C_F+C_A'+C_f)\right)
    \int_0^\infty\frac{1}{t^{(1+\eta)(\frac12+\delta)}}e^{-\alpha t}\,\d t\,,
  \end{align*}
  so that also the estimate \eqref{est3} is proved.\\
  (v). We use the Schauder estimates as in \cite[Prop.~6.4.2]{DapZab2}.
  Let $\eta\in (0, (1-2\delta)/(1+2\delta))$ and set $\eta_0:=\min\{s_A, \eta\}$.
  Then,  recalling that $g\in C^{0,s_A}_b(H)$, 
  we note that
\[
  x\mapsto g(x) + (F(x)+\C_\lambda(x)-f_\lambda(x),D\varphi_\lambda(x))_H
\]
is $\eta_0$-H\"older continuous and bounded in $H$. It follows then
by \cite[Prop.~6.4.2]{DapZab2} 
that
\[
\varphi_\lambda
\in C^{\eta_0+\frac{2}{1+2\delta}}_b(H)\,.
\]
This implies then that $$D\varphi_\lambda\in C^{0,\eta_0+\frac{1-2\delta}{1+2\delta}}_b(H;H)\,.$$
By iterating the argument, this implies that 
\[
  x\mapsto g(x) + (F(x)+\C_\lambda(x)-f_\lambda(x),D\varphi_\lambda(x))_H
\]
is $\min\{s_A, \eta_0+\frac{1-2\delta}{1+2\delta}\}$-H\"older continuous and bounded in $H$.
Again by \cite[Prop.~6.4.2]{DapZab2} we infer that
that
\[
\varphi_\lambda
\in C^{\min\{s_A+\frac{2}{1+2\delta},
 \eta_0+\frac{3-2\delta}{1+2\delta}\}}_b(H)\,.
\]
This implies then that $$D\varphi_\lambda\in 
C^{0,\min\{s_A+\frac{1-2\delta}{1+2\delta},
 \eta_0+2\frac{1-2\delta}{1+2\delta}\}}_b(H;H)\,.$$
By iterating the argument, there exists $k$ sufficiently large 
such that $ \eta_0+2\frac{1-2\delta}{1+2\delta}>s_A$: hence, 
one deduces that 
\[
\varphi_\lambda
\in C^{s_A+\frac{2}{1+2\delta}}_b(H)\,.
\]
  Since
  \[
  s_A + \frac2{1+2\delta} >2
  \]
  by assumption {\bf A6}, 
 this implies 
in particular that $ \varphi_\lambda \in C^2_b(H)$.
Moreover, the same result \cite[Prop.~6.4.2]{DapZab2}  also
ensures   that
$\varphi_\lambda$ is a strong solution to the regularised 
Kolmogorov equation in the sense of 
Definition~\ref{def:class_kolm}.\\
(vi). Again by \cite[Prop.~6.4.2]{DapZab2}, 
since 
\[
x\mapsto g(x) + (F(x)+\C_\lambda(x)-f_\lambda(x),D\varphi_\lambda(x))_H
\]
is bounded in $C^{0,s_A}_b(H)$ uniformly in $\lambda$,
one has that 
\[
  \|\varphi_\lambda\|_{C^2_b(H)}\leq C\alpha^{2\delta-s_A(\frac12+\delta)}=:C_2
\]
for a positive constant $C$ independent of $\lambda$,
so that 
\eqref{est4} follows.
\end{proof}

\subsection{Passage to the limit in the Kolmogorov equation}
\label{ssec:lim_kolm}
We are now ready to  address  the asymptotic behaviour 
of the solutions $(\varphi_{\lambda})_{\lambda>0}$ 
to the Kolmogorov equation \eqref{eq:kolm_reg} as  $\lambda\to
0$.
We show that
the sequence of approximated solutions $(\varphi_{\lambda})_{\lambda>0}$
admits  a limit  $\varphi$, which may be interpreted
as a Friedrichs-type solution of the limiting Kolmogorov equation \eqref{eq:kolm}.
This is rigorously stated in the following result.

\begin{prop}[Convergence of $\varphi_\lambda$]
  \label{prop:lim_kolm}
  Assume {\bf A1--A6}, let $g\in C^{0,s_A}_b(H)$, let $\alpha>\alpha_0$ be fixed,
  and let $(\varphi_{\lambda})_{\lambda>0}$ be the family of solutions
  to \eqref{eq:kolm_reg} as given in \emph{Proposition~\ref{prop:kolm}}.
  Then, for every sequence $\Lambda=(\lambda_n)_n$ 
  with $\lambda_n\searrow0$ as $n\to\infty$,
  there exists a unique 
  \[
  \varphi\in  C_b^{1,\frac{1-2\delta}{1+2\delta}-}(H)  \qquad\text{with}\qquad
  D\varphi\in  C_b^0(H;D(\L^{(\frac12-\delta)-})\,,  
   \] 
  depending only on $A$, $B$, $F$, $G$, and $\Lambda$, 
  and a subsequence $(\varphi_{\lambda_{n_k}})_{k\in\enne}$ satisfying, as $k\to\infty$,
  \begin{align*}
  \lim_{k\to\infty}|\varphi_{\lambda_{n_k}}(x)-\varphi(x)|=0&\qquad\forall\,x\in H\,,\\
  \lim_{k\to\infty}\|D\varphi_{\lambda_{n_k}}(x)- D\varphi(x)\|_H=0&\qquad\forall\,x\in H\,.
  \end{align*}
\end{prop}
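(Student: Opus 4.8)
The plan is to combine the uniform bounds of Proposition~\ref{prop:kolm} with the compactness coming from the embedding $D(\L^\eps)\embed H$ to extract a limit, to identify the limit's derivative by a fundamental-theorem-of-calculus argument, and finally to pin the limit down uniquely by passing to the limit in the mild formulation and invoking a contraction. First I would fix $\lambda_n\searrow0$ and exploit the separability of $H$: choose a countable dense set $(x_j)_j\subset H$. By \eqref{est1} the scalars $\varphi_{\lambda_n}(x_j)$ are bounded, while by \eqref{est2} the vectors $D\varphi_{\lambda_n}(x_j)$ lie in $\{v:\norm{\L^\eps v}_H\leq C_\eps\}$, which is relatively compact in $H$ since $D(\L^\eps)\embed H$ compactly. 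A diagonal extraction over $(x_j)_j$ then yields a subsequence $(\lambda_{n_k})_k$ along which $\varphi_{\lambda_{n_k}}(x_j)$ converges in $\erre$ and $D\varphi_{\lambda_{n_k}}(x_j)$ converges strongly in $H$ for every $j$. Since \eqref{est3} makes $(\varphi_{\lambda_n})_n$ uniformly Lipschitz and $(D\varphi_{\lambda_n})_n$ uniformly $\eta$-H\"older, a three-$\eps$ argument upgrades these convergences from the dense set to every $x\in H$, producing $\varphi:H\to\erre$ and $\Psi:H\to H$ with $\varphi_{\lambda_{n_k}}(x)\to\varphi(x)$ and $D\varphi_{\lambda_{n_k}}(x)\to\Psi(x)$ in $H$ for all $x$.

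Next I would identify $\Psi=D\varphi$ and collect the regularity. Passing to the limit in $\varphi_{\lambda_{n_k}}(x+h)-\varphi_{\lambda_{n_k}}(x)=\int_0^1(D\varphi_{\lambda_{n_k}}(x+sh),h)_H\,ds$ by dominated convergence (the integrand is bounded by $C_1\norm{h}_H$) gives $\varphi(x+h)-\varphi(x)=\int_0^1(\Psi(x+sh),h)_H\,ds$; as $\Psi$ inherits the uniform $\eta$-H\"older bound, hence is continuous, this shows $\varphi\in C^1_b(H)$ with $D\varphi=\Psi$. The bounds then pass to the limit: $\varphi\in C^{1,\eta}_b(H)$ for every $\eta<\frac{1-2\delta}{1+2\delta}$, and, using that $\L^\eps$ is self-adjoint hence weakly closed, the strong convergence $D\varphi_{\lambda_{n_k}}(x)\to D\varphi(x)$ together with $\norm{\L^\eps D\varphi_{\lambda_{n_k}}(x)}_H\leq C_\eps$ forces $D\varphi(x)\in D(\L^\eps)$ with the same bound, for every $\eps<\frac12-\delta$. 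This gives the asserted $\varphi\in C_b^{1,\frac{1-2\delta}{1+2\delta}-}(H)$ and $D\varphi\in C_b^0(H;D(\L^{(\frac12-\delta)-}))$.

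It remains to show the limit is subsequence-independent, and here I would pass to the limit in the mild formulation of Definition~\ref{def:mild_kolm}. Splitting the drift as $(F+\C_{\lambda_{n_k}}-f_{\lambda_{n_k}},D\varphi_{\lambda_{n_k}})_H$, the terms involving $F$ and $f_{\lambda_{n_k}}\to f$ pass to the limit thanks to the strong convergence of $D\varphi_{\lambda_{n_k}}$, so that any limit point satisfies $\varphi=T\varphi+L$, with $Tv(x):=\int_0^\infty e^{-\alpha t}R_t[g+(F-f,Dv)_H](x)\,dt$ and $L(x):=\lim_k\int_0^\infty e^{-\alpha t}R_t[(\C_{\lambda_{n_k}},D\varphi_{\lambda_{n_k}})_H](x)\,dt$. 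Exactly as in Proposition~\ref{prop:kolm}(i), for $\alpha>\alpha_0$ the map $T$ is a contraction on $C^1_b(H)$; hence two subsequential limits $\varphi,\tilde\varphi$ with singular parts $L,\tilde L$ satisfy $\norm{\varphi-\tilde\varphi}_{C^1_b}\leq(1-\theta)^{-1}\norm{L-\tilde L}_{C^1_b}$, so uniqueness reduces to proving $L=\tilde L$.

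The main obstacle is exactly the identification of the singular contribution $L$. The family $(\C_\lambda)_\lambda$ is only uniformly bounded by \eqref{bound_C_lam} and is not uniformly equicontinuous, so it does not converge in $C^0_b$; off $D(B)=V_2$ the values $B_\lambda(x)$ blow up and $\C_\lambda(x)$ need not converge strongly. The route I would take is first to replace $D\varphi_{\lambda_{n_k}}$ by its fixed limit $D\varphi$ inside $R_t$—legitimate since $D\varphi_{\lambda_{n_k}}\to D\varphi$ uniformly on compacta, $N_{Q_t}$ is tight, and $\norm{\C_\lambda}_H\leq C_A'$—reducing $L$ to the limit of $\int_0^\infty e^{-\alpha t}R_t[(\C_{\lambda_{n_k}},D\varphi)_H]\,dt$. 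One then shows that $\C_{\lambda_{n_k}}(w)$ converges weakly in $H$, for every $w$, to a subsequence-independent $\chi(w)$: for $w\in V_2$ this is the strong limit $\C(w)$, whereas for $w\notin V_2$ the blow-up direction of $B_\lambda(w)$ escapes to high frequencies and yields a canonical weak limit dictated by the recession behaviour encoded in {\bf A1}. Pairing this weak convergence against the fixed vector $D\varphi(w)\in D(\L^\eps)$—precisely the regularity gain furnished by the strong-Feller estimate \eqref{est2}—gives $(\C_{\lambda_{n_k}}(w),D\varphi(w))_H\to(\chi(w),D\varphi(w))_H$ pointwise and boundedly, so dominated convergence pins $L$ down independently of the subsequence. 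I expect this weak identification of the degenerate term $\C_\lambda$, and the verification that it survives the Gaussian averaging by $R_t$, to be the genuinely delicate point; the compactness and the contraction are comparatively routine.
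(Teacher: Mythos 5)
Your compactness and identification steps are sound and essentially reproduce the paper's own argument: the paper uses the Ascoli--Arzel\`a theorem on the $H$-compact balls $B_j^V$ (equicontinuity from \eqref{est1} and \eqref{est3}, pointwise relative compactness of the gradients from \eqref{est2} and the compact embedding $D(\L^\eps)\embed H$) plus a diagonal extraction and a density argument, which is the same mechanism as your dense-countable-set extraction with the three-$\eps$ upgrade; the fundamental-theorem-of-calculus identification of $D\varphi$ and the recovery of the $C^{1,\eta}_b$ and $D(\L^\eps)$ bounds in the limit also match the paper's Step 3. Had you stopped there, modulo the reading of the word \emph{unique} discussed below, you would have the paper's proof.

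The genuine gap is in your third part. You attempt to prove that the limit is independent of the subsequence, and this hinges on the claim that for every $w\in H\setminus V_2$ the bounded sequence $\C_{\lambda_{n_k}}(w)$ converges weakly to a \emph{subsequence-independent} element $\chi(w)$, justified by the assertion that the blow-up of $B_\lambda(w)$ ``escapes to high frequencies'' and that {\bf A1} dictates a canonical recession limit. Nothing in {\bf A1}--{\bf A6} supports this: {\bf A1} only yields the uniform bound $\|\C_\lambda(w)\|_H\le C_A'$ via the bounded range of $A^{-1}-k_AI$, which gives weak sequential compactness but no identification whatsoever of the weak limit points; a priori different subsequences of $\Lambda$ may produce different weak limits, and hence different $L$'s in your decomposition $\varphi=T\varphi+L$, so the contraction argument cannot close. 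This is not a repairable technicality within the paper's framework: the entire architecture of the paper (recording the sequence $\Lambda$ inside the very definition of Friedrichs-weak solution, and assuming $\Lambda_1=\Lambda_2$ in Theorem~\ref{thm2}) is designed precisely to avoid having to identify such a canonical limit of $\C_\lambda$. Accordingly, the paper's proof of Proposition~\ref{prop:lim_kolm} claims and proves less than you aim for: it only extracts one subsequence $(\lambda_{n_k})_k$ and one limit $\varphi$; the ``uniqueness'' is that of the pointwise limit along this extraction, the substantive content being that $\varphi$ is built solely from $A$, $B$, $F$, $G$, $\Lambda$ and in particular does not depend on the solutions $u_1,u_2$ to which it is later applied in the It\^o argument (where the \emph{same} subsequence is used for both). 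Your stronger reading of the statement is natural, but the proof you propose for it does not go through, and the weaker reading is all that the subsequent uniqueness-in-law proof requires.
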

\begin{proof}[Proof of Proposition~\ref{prop:lim_kolm}]
{\sc Step 1.}  For every $r>0$,  set $
  B^V_r:=\{x\in V:\|x\|_V\leq r\}$
 to be 
the $V$-closed ball of radius $r$. Let $\Lambda=(\lambda_n)_n$ be an arbitrary sequence satisfying 
$\lambda_n\searrow0$ as $n\to\infty$.
 Let also $j\in\enne_+$ be fixed
and let us consider $(\varphi_{\lambda_n})_n$ restricted to $B_j^V$:
since $V\embed H$ compactly, the ball $B_j^V$ endowed with the 
$H$-distance is a compact metric space.
Thanks to \eqref{est1} it readily follows that the sequence 
$\varphi_{\lambda_n}:B_j^V\to\erre$
are uniformly equicontinuous and bounded: hence, by the Ascoli--Arzel\`a Theorem there exists 
a subsequence $(\varphi_{\lambda^j_{n_k}})_k$ and
$ \varphi_j  : B_j^V\to\erre$, continuous with respect to the topology of $H$, such that 
\[
  \lim_{k\to\infty}\sup_{x\in B_j^V}|\varphi_{\lambda^j_{n_k}}(x)-\varphi_j(x)|=0\,.
\]
Clearly, it follows that $\varphi_{j+1}=\varphi_j$ on $B_j^V$ for every $j\in\enne_+$, so that 
the family $(\varphi_j)_{j>0}$ uniquely determines a function $\varphi:V\to\erre$.
Moreover, via a standard diagonal argument 
one can work with the same subsequence $(\varphi_{\lambda_{n_k}})_k$
for every $j\in\enne$.  Specifically, we have 
\beq\label{aux_conv1}
  \lim_{k\to\infty}\sup_{x\in B_{j}^V}|\varphi_{\lambda_{n_k}}(x)-\varphi(x)|=0 \quad\forall\,j\in\enne_+\,.
\eeq
In particular, this implies the pointwise convergence 
\[
  \lim_{k\to\infty}\varphi_{\lambda_{n_k}}(x) = \varphi(x) \quad\forall\,x\in V\,.
\]
Let us prove that $\varphi$ uniquely extends to a function in $C^{0,1}_b(H)$.
To this end, thanks to \eqref{est1} we note that for every $x_1,x_2\in V$ we have 
\begin{align*}
  &|\varphi(x_1)-\varphi(x_2)|\\
  &\leq 
  |\varphi(x_1) - \varphi_{\lambda_{n_k}}(x_1)|
  +|\varphi_{\lambda_{n_k}}(x_1) - \varphi_{\lambda_{n_k}}(x_2)|
  +|\varphi_{\lambda_{n_k}}(x_2)-\varphi(x_2)|\\
  &\leq |\varphi(x_1) - \varphi_{\lambda_{n_k}}(x_1)|+ C_1\norm{x_1-x_2}_H 
  +|\varphi_{\lambda_{n_k}}(x_2)-\varphi(x_2)|
\end{align*}
for every $k\in\enne$, so that letting $k\to\infty$ yields 
\[
  |\varphi(x_1)-\varphi(x_2)|\leq  C_1\norm{x_1-x_2}_H \quad\forall\,x_1,x_2\in V\,.
\]
By density of $V$ in $H$ this shows indeed that $\varphi\in C^{0,1}_b(H)$, 
where we have used the same symbol for the extension. 
Moreover, for every $x\in H$, let $(x_m)_{m\in\enne}\subset V$ be such that 
$x_m\to x$ in $H$ as $m\to\infty$. Then, again by \eqref{est1} and the 
Lipschitz-continuity of $\varphi_{\lambda_{n_k}}$ and  $\varphi$ we have 
\begin{align*}
 &|\varphi_{\lambda_{n_k}}(x) - \varphi(x)|\\
 &\leq|\varphi_{\lambda_{n_k}}(x) - \varphi_{\lambda_{n_k}}(x_m)|
 +|\varphi_{\lambda_{n_k}}(x_m) - \varphi(x_m)|
 +|\varphi(x_m)-\varphi(x)|\\
 &\leq 2 C_1\norm{x_m-x}_H + |\varphi_{\lambda_{n_k}}(x_m) - \varphi(x_m)|\,,
\end{align*}
and it is a standard matter to see that this yields 
\[
  \lim_{k\to\infty}\varphi_{\lambda_{n_k}}(x) = \varphi(x) \quad\forall\,x\in H\,.
\]

{\sc Step 2.} Let us consider now the sequence of derivatives $D\varphi_{\lambda_n}:B_j^V\to H$, 
for $j\in\enne_+$ fixed.
Thanks to \eqref{est3} it follows that $(D\varphi_{\lambda_n})_n$ is uniformly 
equicontinuous, while \eqref{est2} implies that 
for every $x\in B^V_r$  the set  $(D\varphi_{\lambda_n}(x))_n$ is relatively compact in $H$.
Again by the Ascoli-Arzel\`a Theorem, for every $j\in\enne$
there exists $\varphi'_j:B^V_j\to H$, continuous with respect to the topology of $H$ on $B^V_j$,
and a subsequence $(\lambda_{n_k}^j)_k$
such that 
\[
  \lim_{k\to\infty}\sup_{x\in B_j^V}\|D\varphi_{\lambda^j_{n_k}}(x)-\varphi'_j(x)\|_H=0\,.
\]
As before, we note that the sequence $(\varphi'_j)_j$ uniquely determines a function $\varphi':V\to H$
such that,  by  possibly using a diagonal argument,
\beq\label{aux_conv2}
  \lim_{k\to\infty}\sup_{x\in B_{r_j}^V}|D\varphi_{\lambda_{n_k}}(x)-\varphi'(x)|=0 
  \quad\forall\,j\in\enne_+\,.
\eeq
In particular, the pointwise convergence 
\[
  \lim_{k\to\infty}\|D\varphi_{\lambda_{n_k}}(x) -\varphi'(x)\|_H=0 \quad\forall\,x\in V
\]
 follows. 
Let us prove that $\varphi'$ uniquely extends to a function in $C^{0,\eta}_b(H;H)$, 
where $\eta\in(0,\frac{1-2\delta}{1+2\delta})$ is fixed.
To this end, thanks to \eqref{est3} we note that for every $x_1,x_2\in V$ we have 
\begin{align*}
  &\|\varphi'(x_1)-\varphi'(x_2)\|_H\\
  &\leq 
  \|\varphi'(x_1) - D\varphi_{\lambda_{n_k}}(x_1)\|_H
  +\|D\varphi_{\lambda_{n_k}}(x_1) - D\varphi_{\lambda_{n_k}}(x_2)\|_H
  +\|D\varphi_{\lambda_{n_k}}(x_2)-\varphi'(x_2)|\\
  &\leq \|\varphi'(x_1) - D\varphi_{\lambda_{n_k}}(x_1)\|_H
  +C_\eta\norm{x_1-x_2}_H^\eta + \|D\varphi_{\lambda_{n_k}}(x_2)-\varphi'(x_2)\|_H
\end{align*}
for every $k\in\enne$, so that letting $k\to\infty$ yields 
\[
  \|\varphi'(x_1)-\varphi'(x_2)\|_H\leq C_\eta\norm{x_1-x_2}_H^\eta \quad\forall\,x_1,x_2\in V\,.
\]
By density of $V$ in $H$ this shows indeed that $\varphi'\in C^{0,\eta}_b(H;H)$, 
where again we have used the same symbol for the extension. 
Furthermore, for every $x\in H$, let $(x_m)_{m\in\enne}\subset V$ be such that 
$x_m\to x$ in $H$ as $m\to\infty$. Then, again by \eqref{est3} and the 
H\"older-continuity of  $ D\varphi_{\lambda_{n_k}}$ and $\varphi'$
 we have 
\begin{align*}
 &\|D\varphi_{\lambda_{n_k}}(x) - \varphi'(x)\|_H\\
 &\leq\|D\varphi_{\lambda_{n_k}}(x) - D\varphi_{\lambda_{n_k}}(x_m)\|_H
 +\|D\varphi_{\lambda_{n_k}}(x_m) - \varphi'(x_m)\|_H
 +\|\varphi'(x_m)-\varphi'(x)\|_H\\
 &\leq 2C_\eta\norm{x_m-x}_H^\eta + \|D\varphi_{\lambda_{n_k}}(x_m) - \varphi'(x_m)\|_H\,,
\end{align*}
so that we deduce the convergence
\[
  \lim_{k\to\infty}\|D\varphi_{\lambda_{n_k}}(x) -\varphi'(x)\|_H=0 \quad\forall\,x\in H\,.
\]

{\sc Step 3.} Let us show that $\varphi\in C^{1,\eta}(H)$ and
$D\varphi=\varphi'$.  By  the convergences \eqref{aux_conv1}--\eqref{aux_conv2} proved above one has that
\[
  (\varphi_{\lambda_{n_k}})_k \quad\text{is Cauchy in } C^1_b(B^V_j) \quad\forall\,j\in\enne_+\,.
\]
It follows in particular that $\varphi$ satisfies $\varphi_{|B^V_j}\in C^1(B^V_j)$
and $D(\varphi_{|B^V_j})=\varphi'_{|B^V_j}$ for every $j$.
In particular, this implies that $\varphi\in C^1(V)$ and 
$D\varphi(x)=\varphi'(x)$ in $V^*$ for every $x\in V$.
Furthermore, let $x,h\in H$ be fixed, and let $(x_m)_m,(h_m)_m\subset V$ such that 
$x_m\to x$ and $h_m\to h$ in $H$ as $m\to\infty$.
Since $\varphi\in C^1(V)$ with $D\varphi=(\varphi')_{|V}:V\to V^*$
and $\varphi'\in C^{0,\eta}(H;H)$, one has that 
\begin{align*}
  \varphi(x_m+ h_m)-\varphi(x_m)&=
  \int_0^1 \ip{D\varphi(x_m+\sigma h_m)}{h_m}_{V^*,V}\,\d \sigma
  =\int_0^1 \left(\varphi'(x_m+\sigma h_m), h_m\right)_{H}\,\d \sigma\,.
\end{align*}
Letting $m\to\infty$ yields, by Dominated Convergence Theorem, 
\[
  \varphi(x+ h)-\varphi(x) = \int_0^1\left(\varphi'(x+\sigma h), h\right)_{H}\,\d \sigma
  \qquad\forall\,x,h\in H\,,
\]
showing that $\varphi:H\to\erre$ is G\^ateaux differentiable with $D\varphi=\varphi':H\to H$.
Since $\varphi'\in C^{0,\eta}_b(H;H)$, this implies then that $\varphi\in C^{1,\eta}_b(H)$,
as required. 
\end{proof}


\section{Uniqueness in law and weak stability}
\label{sec:uniq}

\subsection{Proof of Theorem~\ref{thm2}}
This section is  devoted  to the proof of  our main result
concerning uniqueness in law, namely,  Theorem~\ref{thm2}.
To this end, let $u_0\in H$ be fixed, and let 
\[
  (\Omega, \cF, (\cF_t)_{t\in[0,T]}, \P, W, u_1, y_1,\Lambda) \qquad\text{and}\qquad
  (\Omega, \cF, (\cF_t)_{t\in[0,T]}, \P, W, u_2, y_2,\Lambda)
\]
 be  two Friedrichs-weak solutions to problem \eqref{eq1}--\eqref{eq3}
with respect to the same initial datum $u_0$,
defined on the same stochastic basis, and with  the  same approximating sequence $\Lambda=(\lambda_n)$.
Let $(u_{0,i}^n, G^i_n)_n\subset H\times \cL^2(H,H)$
and $(\Omega, \cF, (\cF_t)_{t\in[0,T]}, \P, W, u_n^i)$, for $i=1,2$, be some sequences 
of data and analytically strong solutions that approximate $u_1$ and $u_2$, respectively,
in the sense of Definition~\ref{def:weak}: in particular, one has for $i=1,2$
and for all $T>0$ that 
\begin{align}
  \label{conv1_n}
  &u_{0,i}^n\to u_0 \quad\text{in } H\,,\\
  \label{conv2_n}
  &G_n^i\to G \quad\text{in } \cL^2(H,H)\,,\\
  \label{conv4_n}
  &u^i_n\wstarto u_i \quad\text{in } L^2_w(\Omega; L^\infty(0,T; H)) \cap L^2_\cP(\Omega; L^2(0,T; V))\,,\\
  \label{conv5_n}
  &u^i_n\to u_i \quad\text{in } L^2(0,T; H)\,, \quad\P\text{-a.s.}\,,\\
  \label{conv6_n}
  &C_{\lambda_n}(u^i_n) \wstarto y_i \quad\text{in } L^\infty_\cP(\Omega\times(0,T); H)\,.
\end{align}

Let  $(\varphi_{\lambda})_{\lambda}$ be the sequence of solutions 
to the regularised Kolmogorov equation \eqref{eq:kolm_reg} as given in 
Proposition~\ref{prop:kolm}, associated to the same sequence $\Lambda$.
Then, Proposition~\ref{prop:lim_kolm} ensures that there exists subsequence $(\lambda_{n_k})_k$
and 
\[
  \varphi\in  C_b^{1, \frac{1-2\delta}{1+2\delta}-}(H) 
  \qquad\text{with}\qquad
  D\varphi\in C^{0}_b(H; D(\L^{ (\frac12-\delta)-})\,,
\] 
depending only on $A$, $B$, $F$, $G$, and $\Lambda$
(but not on $u_1$ and $u_2$), such that, as $k\to\infty$,
 \begin{align}
  \label{conv1_lam}
  \lim_{k\to\infty}|\varphi_{\lambda_{n_k}}(x)-\varphi(x)|=0&\qquad\forall\,x\in H\,,\\
  \label{conv2_lam}
  \lim_{k\to\infty}\|D\varphi_{\lambda_{n_k}}(x)- D\varphi(x)\|_H=0&\qquad\forall\,x\in H\,.
\end{align}

Now, let $i\in\{1,2\}$ be fixed. 
Since $\varphi_{\lambda_{n_k}}\in C^2_b(H)$, It\^o formula for $\varphi_{\lambda_{n_k}}(u_{n_k}^i)$ yields
\begin{align*}
  &\E\varphi_{\lambda_{n_k}}(u_{n_k}^i(t))
  -\frac12\E\int_0^t\operatorname{Tr}\left[G_n^i(G_n^i)^*D^2\varphi_{\lambda_{n_k}}(u_{n_k}^i(s))\right]\,\d s\\
  &\qquad+\E\int_0^t\left(\L u_{n_k}^i(s), D\varphi_{\lambda_{n_k}}(u_{n_k}^i(s))\right)_H\,\d s\\
  &=\varphi_{\lambda_{n_k}}(u_{0,i}^n)
  +\E\int_0^t\left(F(u_{n_k}^i(s)) + \C_{\lambda_{n_k}}(u_{n_k}^i(s)) - f(u_{n_k}^i(s)), 
  D\varphi_{\lambda_{n_k}}(u_{n_k}^i(s))\right)_H\,\d s\,.
\end{align*}
Recalling that $\varphi_{\lambda_{n_k}}$ solves \eqref{eq:kolm_reg}, we deduce that 
\begin{align*}
  &\E\varphi_{\lambda_{n_k}}(u_{n_k}^i(t)) -\alpha\E\int_0^t\varphi_{\lambda_{n_k}}(u_{n_k}^i(s))\,\d s\\
  &\qquad
   +  \frac12\E\int_0^t\operatorname{Tr}
  \left[\left(Q - G_{n_k}^i(G_{n_k}^i)^*\right)D^2\varphi_{\lambda_{n_k}}(u_{n_k}^i(s))\right]\,\d s
  +\E\int_0^tg(u_{n_k}^i(s))\,\d s\\
  &=\varphi_{\lambda_{n_k}}(u_{0,i}^{n_k})
  +\E\int_0^t\left(f_{\lambda_{n_k}}(u_{n_k}^i(s)) - f(u_{n_k}^i(s)), 
  D\varphi_{\lambda_{n_k}}(u_{n_k}^i(s))\right)_H\,\d s
\end{align*}
for every $t\in[0,T]$.
Now, since $f:H\to H$ is maximal monotone, single-valued, and of
bounded range with $D(f)=H$, 
by the Dominated Convergence Theorem as $k\to + \infty$  one has 
that
\[
  f_{\lambda_{n_k}}(u_{n_k}^i) - f(u_{n_k}^i) \to 0 \quad\text{in } L^1_\cP(\Omega; L^2(0,T; H))\,,
\]
which yields in turn, thanks to \eqref{est1}, that
\begin{align*}
  &\E\int_0^t\left(f_{\lambda_{n_k}}(u_{n_k}^i(s)) - f(u_{n_k}^i(s)), 
  D\varphi_{\lambda_{n_k}}(u_{n_k}^i(s))\right)_H\,\d s\\
  &\qquad\leq  C_1
  \norm{f_{\lambda_{n_k}}(u_{n_k}^i) - f(u_{n_k}^i)}_{L^1_\cP(\Omega; L^2(0,T; H))}\to0\,.
\end{align*}
Moreover, given a fixed $\eta\in(0,\frac{1-2\delta}{1+2\delta})$, 
thanks to \eqref{est3},  \eqref{conv1_n}, and \eqref{conv1_lam},
we have
\begin{align*}
  \left|\varphi_{\lambda_{n_k}}(u_{0,i}^{n_k}) - \varphi(u_0)\right|
  &\leq \left|\varphi_{\lambda_{n_k}}(u_{0,i}^{n_k}) - \varphi_{\lambda_{n_k}}(u_0)\right|
  +\left|\varphi_{\lambda_{n_k}}(u_{0}) - \varphi(u_0)\right|\\
  &\leq C\norm{u_{0,i}^{n_k} - u_0}_H^\eta + \left|\varphi_{\lambda_{n_k}}(u_{0}) - \varphi(u_0)\right| \to 0\,.
\end{align*}
Similarly, exploiting again \eqref{est3} we have
\begin{align*}
  \left|\varphi_{\lambda_{n_k}}(u_{n_k}^i (t)) - \varphi(u_i (t))\right|
  &\leq \left|\varphi_{\lambda_{n_k}}(u_{n_k}^i (t)) - \varphi_{\lambda_{n_k}}(u_i (t))\right|
  +\left|\varphi_{\lambda_{n_k}}(u_i (t)) - \varphi(u_i (t))\right|\\
  &\leq C_\eta\norm{u_{n_k}^i (t) - u_i (t)}_H^\eta + \left|\varphi_{\lambda_{n_k}}(u_i (t)) 
  - \varphi(u_i (t))\right|\quad  \forall t\in[0,T]  \,,
\end{align*}
which yields, thanks to \eqref{conv5_n}, \eqref{conv1_lam},
and the Dominated Convergence Theorem, that
\[
  \varphi_{\lambda_{n_k}}(u_{n_k}^i(t)) \to \varphi(u_i(t)) 
  \quad\text{in } L^1(\Omega) \quad\forall\,t\in[0,T]\,.
\]
Eventually, using \eqref{est4} we get for every $s>0$ that 
\begin{align*}
  &\operatorname{Tr}
  \left[\left(Q - G_{n_k}^i(G_{n_k}^i)^*\right)D^2\varphi_{\lambda_{n_k}}(u_{n_k}^i(s))\right]\\
  &\hspace{1.1mm}\stackrel{\eqref{est4}}{\leq}
   C_2\norm{GG^* - G_{n_k}^i(G_{n_k}^i)^*}_{\cL^1(H,H)}\\
  &\quad\leq  C_2\left[\norm{G}_{\cL^2(H,H)}
  \norm{G^*-(G_{n_k}^i)^*}_{\cL^2(H,H)}+
  \norm{G- G_{n_k}^i}_{\cL^2(H,H)}
  \norm{(G_{n_k}^i)^*}_{\cL^2(H,H)}
  \right]\\
  &\quad\leq  C_2\norm{G- G_{n_k}^i}_{\cL^2(H,H)}
  \left(\norm{G}_{\cL^2(H,H)}+
  \norm{G_{n_k}^i}_{\cL^2(H,H)}
  \right)\,.
\end{align*}
Hence, by \eqref{conv2_n} we obtain 
\[
   \left|  \E\int_0^t\operatorname{Tr}
  \left[\left(Q -
      G_{n_k}^i(G_{n_k}^i)^*\right)D^2\varphi_{\lambda_{n_k}}(u_{n_k}^i(s))\right]\,\d
  s \right| 
  \leq  C_2 t\norm{G- G_{n_k}^i}_{\cL^2(H,H)} \to 0\,.
\]
Putting everything together and letting $k\to +  \infty$, for $i=1,2$
 we infer by the arbitrariness of $T>0$ that 
\begin{align*}
  &\E\varphi(u_i(t)) -\alpha\E\int_0^t\varphi(u_i(s))\,\d s
  +\E\int_0^tg(u_i(s))\,\d s =\varphi(u_{0}) \quad\forall\,t\geq0
\end{align*}
An elementary computation yields then 
\[
  e^{-\alpha t}\E\varphi(u_i(t)) 
  +\E\int_0^te^{-\alpha s}g(u_i(s))\,\d s =\varphi(u_{0}) \quad\forall\,t\geq0\,,
\]
so that letting $t\to+\infty$ yields 
\beq
  \label{eq_uniq}
  \varphi(u_{0}) = \E\int_0^{+\infty}e^{-\alpha t}g(u_1(s))\,\d s
  =\E\int_0^{+\infty}e^{-\alpha t}g(u_2(s))\,\d s\,.
\eeq
Since $g$ is arbitrary in $C^{0,s_A}_b(H)$ and $u_1, u_2\in
L^2_\cP(\Omega; C^0([0,+\infty); H))$, 
this implies that $u_1$ and $u_2$ have the same law on
$C^0([0,+\infty); H)$, as desired. For  additional   details,
 the Reader is referred  to 
\cite[sec.~4.1]{BOS_uniq}.

\subsection{Proof of Theorem~\ref{thm3}}
The proof is a direct consequence of the arguments presented in 
the previous section. Indeed, given a sequence $(\mu_n)_n$ as in Theorem~\ref{thm3},
one has that $\mu_n$ is the distribution of an analytically strong solution $u_n$ of 
\eqref{eq:n}. Since $(u_0^n)_n$ are bounded in $H$ and 
$(G_n)_n$ are bounded in $\cL^2(H,H)$, the same arguments of Section~\ref{sec:exist}
ensure that $(u_n)_n$ is bounded in $L^1_\mP(\Omega; C^0([0,T]; H)\cap L^2(0,T; V))$, 
hence that $(\mu_n)_n$ is tight in $L^2(0,T; H)$.
We infer that, possibly extracting a subsequence, one has that 
$\mu_n\wstarto\mu$ in $L^2(0,T; H)$. Moreover, the proof of Theorem~\ref{thm2}
also ensures that such $\mu$ is unique and coincides with 
the law of $u$, where $u$ is a Friedrichs-weak solution in the sense of 
Definition~{def:weak}:
hence, the convergence 
$\mu_n\wstarto\mu$ hols for the entire sequence $(\mu_n)_n$, and this concludes the proof.


\section*{Acknowledgement}
This research was funded in
whole or in part by the Austrian Science Fund (FWF) projects 10.55776/F65,  10.55776/I5149,
10.55776/P32788, 
as well as by the OeAD-WTZ project CZ 09/2023. 
C.O.~and L.S.~are members of Gruppo Nazionale 
per l'Analisi Matematica, la Probabilit\`a 
e le loro Applicazioni (GNAMPA), 
Istituto Nazionale di Alta Matematica (INdAM).
The present research is also part of the activities of 
``Dipartimento di Eccellenza 2023-2027'' of Politecnico di Milano.
For open access purposes, the authors have applied a CC BY public copyright
license to any author-accepted manuscript version arising from this
submission.

\section*{Data availability statement}
No new data were created or analysed in this study. 
Data sharing is not applicable.

\section*{Conflict of interest statement}
The authors have no conflicts of interest to declare.


\bibliographystyle{abbrv}


\end{document}